\begin{document}
\newcommand{\M}{{\mathcal M}}
\newcommand{\loc}{{\mathrm{loc}}}
\newcommand{\core}{C_0^{\infty}(\Omega)}
\newcommand{\sob}{W^{1,p}(\Omega)}
\newcommand{\sobloc}{W^{1,p}_{\mathrm{loc}}(\Omega)}
\newcommand{\merhav}{{\mathcal D}^{1,p}}
\newcommand{\be}{\begin{equation}}
\newcommand{\ee}{\end{equation}}
\newcommand{\mysection}[1]{\section{#1}\setcounter{equation}{0}}
\newcommand{\laplace}{\Delta}
\newcommand{\pl}{\laplace_p}
\newcommand{\grad}{\nabla}
\newcommand{\pd}{\partial}
\newcommand{\bo}{\pd}
\newcommand{\csub}{\subset \subset}
\newcommand{\sm}{\setminus}
\newcommand{\ssm}{:}
\newcommand{\diver}{\mathrm{div}\,}
\newcommand{\bea}{\begin{eqnarray}}
\newcommand{\eea}{\end{eqnarray}}
\newcommand{\bean}{\begin{eqnarray*}}
\newcommand{\eean}{\end{eqnarray*}}
\newcommand{\thkl}{\rule[-.5mm]{.3mm}{3mm}}
\newcommand{\cw}{\stackrel{\rightharpoonup}{\rightharpoonup}}
\newcommand{\id}{\operatorname{id}}
\newcommand{\supp}{\operatorname{supp}}
\newcommand{\wlim}{\mbox{ w-lim }}
\newcommand{\mymu}{{x_N^{-p_*}}}
\newcommand{\R}{{\mathbb R}}
\newcommand{\N}{{\mathbb N}}
\newcommand{\Z}{{\mathbb Z}}
\newcommand{\Q}{{\mathbb Q}}
\newcommand{\abs}[1]{\lvert#1\rvert}
\newtheorem{theorem}{Theorem}[section]
\newtheorem{corollary}[theorem]{Corollary}
\newtheorem{lemma}[theorem]{Lemma}
\newtheorem{notation}[theorem]{Notation}
\newtheorem{definition}[theorem]{Definition}
\newtheorem{remark}[theorem]{Remark}
\newtheorem{proposition}[theorem]{Proposition}
\newtheorem{assertion}[theorem]{Assertion}
\newtheorem{problem}[theorem]{Problem}
\newtheorem{conjecture}[theorem]{Conjecture}
\newtheorem{question}[theorem]{Question}
\newtheorem{example}[theorem]{Example}
\newtheorem{Thm}[theorem]{Theorem}
\newtheorem{Lem}[theorem]{Lemma}
\newtheorem{Pro}[theorem]{Proposition}
\newtheorem{Def}[theorem]{Definition}
\newtheorem{defi}[theorem]{Definition}
\newtheorem{Exa}[theorem]{Example}
\newtheorem{Exs}[theorem]{Examples}
\newtheorem{Rems}[theorem]{Remarks}
\newtheorem{Rem}[theorem]{Remark}

\newtheorem{Cor}[theorem]{Corollary}
\newtheorem{Conj}[theorem]{Conjecture}
\newtheorem{Prob}[theorem]{Problem}
\newtheorem{Ques}[theorem]{Question}
\newtheorem*{corollary*}{Corollary}
\newtheorem*{theorem*}{Theorem}
\newtheorem{thm}[theorem]{Theorem}
\newtheorem{lem}[theorem]{Lemma}
\newtheorem{prop}[theorem]{Proposition}
\newtheorem{cor}[theorem]{Corollary}
\newtheorem{ex}[theorem]{Example}
\newtheorem{rem}[theorem]{Remark}
\newtheorem{rems}[theorem]{Remarks}
\newtheorem*{thmm}{Theorem}
\newcommand{\Hmm}[1]{\leavevmode{\marginpar{\tiny%
$\hbox to 0mm{\hspace*{-0.5mm}$\leftarrow$\hss}%
\vcenter{\vrule depth 0.1mm height 0.1mm width \the\marginparwidth}%
\hbox to
0mm{\hss$\rightarrow$\hspace*{-0.5mm}}$\\\relax\raggedright #1}}}
\newcommand{\pf}{\noindent \mbox{{\bf Proof}: }}


\renewcommand{\theequation}{\thesection.\arabic{equation}}
\catcode`@=11 \@addtoreset{equation}{section} \catcode`@=12
\newcommand{\Real}{\mathbb{R}}
\newcommand{\real}{\mathbb{R}}
\newcommand{\Nat}{\mathbb{N}}
\newcommand{\ZZ}{\mathbb{Z}}
\newcommand{\CC}{\mathbb{C}}
\newcommand{\Pess}{\opname{Pess}}
\newcommand{\Proof}{\mbox{\noindent {\bf Proof} \hspace{2mm}}}
\newcommand{\mbinom}[2]{\left (\!\!{\renewcommand{\arraystretch}{0.5}
\mbox{$\begin{array}[c]{c}  #1\\ #2  \end{array}$}}\!\! \right )}
\newcommand{\brang}[1]{\langle #1 \rangle}
\newcommand{\vstrut}[1]{\rule{0mm}{#1mm}}
\newcommand{\rec}[1]{\frac{1}{#1}}
\newcommand{\set}[1]{\{#1\}}
\newcommand{\dist}[2]{$\mbox{\rm dist}\,(#1,#2)$}
\newcommand{\opname}[1]{\mbox{\rm #1}\,}
\newcommand{\mb}[1]{\;\mbox{ #1 }\;}
\newcommand{\undersym}[2]
 {{\renewcommand{\arraystretch}{0.5}  \mbox{$\begin{array}[t]{c}
 #1\\ #2  \end{array}$}}}
\newlength{\wex}  \newlength{\hex}
\newcommand{\understack}[3]{%
 \settowidth{\wex}{\mbox{$#3$}} \settoheight{\hex}{\mbox{$#1$}}
 \hspace{\wex}  \raisebox{-1.2\hex}{\makebox[-\wex][c]{$#2$}}
 \makebox[\wex][c]{$#1$}   }%
\newcommand{\smit}[1]{\mbox{\small \it #1}}
\newcommand{\lgit}[1]{\mbox{\large \it #1}}
\newcommand{\scts}[1]{\scriptstyle #1}
\newcommand{\scss}[1]{\scriptscriptstyle #1}
\newcommand{\txts}[1]{\textstyle #1}
\newcommand{\dsps}[1]{\displaystyle #1}
\newcommand{\dx}{\,\mathrm{d}x}
\newcommand{\dy}{\,\mathrm{d}y}
\newcommand{\dz}{\,\mathrm{d}z}
\newcommand{\dm}{\,\mathrm{d}m}
\newcommand{\dt}{\,\mathrm{d}t}
\newcommand{\dr}{\,\mathrm{d}r}
\newcommand{\du}{\,\mathrm{d}u}
\newcommand{\dv}{\,\mathrm{d}v}
\newcommand{\dV}{\,\mathrm{d}V}
\newcommand{\ds}{\,\mathrm{d}s}
\newcommand{\da}{\,\mathrm{d}\alpha}
\newcommand{\db}{\,\mathrm{d}\beta}
\newcommand{\dS}{\,\mathrm{d}S}
\newcommand{\dk}{\,\mathrm{d}k}

\newcommand{\dphi}{\,\mathrm{d}\phi}
\newcommand{\dtau}{\,\mathrm{d}\tau}
\newcommand{\dxi}{\,\mathrm{d}\xi}
\newcommand{\deta}{\,\mathrm{d}\eta}
\newcommand{\dsigma}{\,\mathrm{d}\sigma}
\newcommand{\dtheta}{\,\mathrm{d}\theta}
\newcommand{\dnu}{\,\mathrm{d}\nu}

\def\ga{\alpha}     \def\gb{\beta}       \def\gg{\gamma}
\def\gc{\chi}       \def\gd{\delta}      \def\ge{\epsilon}
\def\gth{\theta}                         \def\vge{\varepsilon}
\def\gf{\phi}       \def\vgf{\varphi}    \def\gh{\eta}
\def\gi{\iota}      \def\gk{\kappa}      \def\gl{\lambda}
\def\gm{\mu}        \def\gn{\nu}         \def\gp{\pi}
\def\vgp{\varpi}    \def\gr{\rho}        \def\vgr{\varrho}
\def\gs{\sigma}     \def\vgs{\varsigma}  \def\gt{\tau}
\def\gu{\upsilon}   \def\gv{\vartheta}   \def\gw{\omega}
\def\gx{\xi}        \def\gy{\psi}        \def\gz{\zeta}
\def\Gg{\Gamma}     \def\Gd{\Delta}      \def\Gf{\Phi}
\def\Gth{\Theta}
\def\Gl{\Lambda}    \def\Gs{\Sigma}      \def\Gp{\Pi}
\def\Gw{\Omega}     \def\Gx{\Xi}         \def\Gy{\Psi}

\renewcommand{\div}{\mathrm{div}}
\newcommand{\red}[1]{{\color{red} #1}}

%


\newcommand{\De} {\Delta}
\newcommand{\la} {\lambda}
\newcommand{\bn}{\mathbb{B}^{2}}
\newcommand{\rn}{\mathbb{R}^{2}}
\newcommand{\bnn}{\mathbb{B}^{N}}
\newcommand{\rnn}{\mathbb{R}^{N}}
\newcommand{\kP}{k_{P}^{M}}
\newcommand{\gpm}{G_{P}^{M}}
\newcommand{\Lp}{L_{\Phi}}
\newcommand{\gmj}{G^{M_{j}}_{P}(x, p)}
\newcommand{\ojr}{\gw_{j}(R_{1}, R_{2})}
\newcommand{\oj}{\gw_{j}}

\newcommand{\authorfootnotes}{\renewcommand\thefootnote{\@fnsymbol\c@footnote}}%

\def\e{{\text{e}}}
\def\N{{I\!\!N}}

\numberwithin{equation}{section} \allowdisplaybreaks

\title[Green functions of critical operators on Manifolds]{On Green functions of second-order elliptic operators on Riemannian Manifolds:\\ the critical case}

\author{Debdip Ganguly}
\address{Debdip Ganguly, Department of Mathematics,  Technion - Israel Institute of Technology, Haifa 32000, Israel}
\email{gdebdip@technion.ac.il}
\author{Yehuda Pinchover}
\address{Yehuda Pinchover,
Department of Mathematics, Technion - Israel Institute of
Technology,   Haifa 32000, Israel}
\email{pincho@technion.ac.il}

\date{}

\begin{abstract}
Let $P$ be a second-order, linear, elliptic operator with real coefficients which is defined on a noncompact and connected Riemannian manifold $M$.  It is well known that the equation $Pu=0$ in $M$ admits a positive supersolution which is not a solution if and only if $P$ admits a unique positive minimal Green function on $M$, and in this case, $P$ is said to be {\em subcritical} in $M$. If $P$ does not admit a positive Green function but admits a global positive (super)solution, then such a solution is called a {\em ground state} of $P$ in $M$, and $P$ is said to be {\em critical} in $M$.

We prove for a critical operator $P$ in $M$, the existence of a Green function which is dominated above by the ground state of $P$ away from the singularity. Moreover, in a certain class of Green functions, such a Green function is unique, up to an addition of a product of the ground states of $P$ and $P^\star$.
Under some further assumptions, we describe the behaviour at infinity of such a Green function. This result extends and sharpens the celebrated result of P.~Li and L.-F.~Tam concerning the existence of a {\em symmetric} Green function for the Laplace-Beltrami operator on a smooth and  {\em complete} Riemannian manifold $M$.

\vspace{.2cm}

\noindent  2000  \! {\em Mathematics  Subject  Classification.}
{Primary 35J08 ; Secondary 31C35, 35A08, 35B09, 58G03.} \\[1mm]
\noindent {\em Keywords.}  Fundamental solution, Green function, critical operator, positive solutions.
\end{abstract}
\maketitle
 \section{Introduction}\label{sec_int}\label{sec_1}
Let $M$ be a noncompact and connected  manifold of dimension $N\geq 2$ and of class $C^2$. We assume that $\nu$ is a positive measure on $M$, satisfying
$\dnu=f\, \mbox{d}\mathrm{vol}$, where $f$ is a strictly positive function and $\mathrm{vol}$ is the volume form of $M$.
On $M$ we consider a second-order elliptic operator $P$ with real coefficients which (in any coordinate system $(U;x_{1},\ldots,x_{N})$) is of the divergence form
\begin{equation} \label{div_P}
Pu:=-\div \left[\big(A(x)\nabla u +  u\tilde{b}(x) \big) \right]  +
 b(x)\cdot\nabla u   +c(x)u.
\end{equation}
Here, the minus divergence is the formal adjoint of the gradient with respect to the measure $\nu$.  We assume that for every $x\in\Gw$ the matrix $A(x):=\big[a^{ij}(x)\big]$ is symmetric and that the real quadratic form
\be\label{ellip}
 \xi \cdot A(x) \xi := \sum_{i,j =1}^N \xi_i a^{ij}(x) \xi_j \qquad
 \xi \in \Real ^N
\end{equation}
is positive definite. Moreover, throughout the paper it is assumed that $P$ is locally uniformly elliptic, and that locally, the coefficients of $P$ are sufficiently regular in $M$ such that standard elliptic (local) regularity results hold true. Our results
hold for example when  $A$ and $f$ are locally H\"{o}lder continuous, $b,\,\tilde{b}$ are Borel measurable vector fields in $M$ of class
$L^p_{\mathrm{loc}}(M)$,  and $c \in L^{p/2}_{\mathrm{loc}}(M)$ for some $p > N$.   In fact, we need to assume further local regularity on the coefficients that guarantee the existence of the limit
 \begin{equation}\label{eq_11}
 \lim_{x\to x_0}\frac{u(x)}{v(x)}\,,
\end{equation}
where $u$ and $v$ are positive solutions of the equation $Pu=0$ in a punctured neighborhood of any $x_0\in M$, and the limit might be $\infty$ (for sufficient conditions that guarantee it, see for example \cite{Fuchs} and references therein).

The formal adjoint $P^*$ of the operator $P$ is defined on its natural space $L^2(M, \dnu)$. When $P$ is in divergence form (\ref{div_P}) and $b = \tilde{b}$, the operator
\be
\nonumber
Pu = - \div \left[ \big(A \grad u + u b\big) \right] + b \cdot \grad u + c u,
\ee
is {\em symmetric} in the space $L^2(M, \dnu)$. Throughout the paper, we call this setting the {\em symmetric case}.

\medskip

By a solution $v$ of the equation $Pu=0$ in a domain $\Gw\subset M$, we mean $v\in W^{1,2}_{\loc}(\Gw)$ that satisfies  the equation $Pu=0$ in $\Gw$ in the {\em weak sense}. Subsolutions and supersolutions are defined similarly. We denote the cone of all positive solutions of the
equation $Pu = 0$ in $\Omega$ by $\mathcal{C}_{P}(\Gw)$. We say that $P$ is {\em nonnegative in} $\Gw$ (and denote it by $P\geq 0$ in $\Gw$) if $\mathcal{C}_{P}(\Gw)\neq \emptyset$.
We recall that in the symmetric case, by the Allegretto-Piepenbrink theorem,  $P\geq 0$ in $\Gw$ if and only if the associated quadratic form is nonnegative on $\core$ (see for example \cite{pinch2}).
\medskip

{\bf Throughout the paper we always assume that $P\geq 0$ in $M$.}

\medskip

\begin{defi}\label{def_Green}{\em
A function $G_P^M:M\times M \to [-\infty,\infty]$ is said to be a {\em Green function (fundamental solution)} of the operator $P$ in $M$ if for any  $x$, $y\in M$
$$P(x,\partial_x)G_P^M(x,y)=\gd_y(x) \quad \mbox{and} \quad  P^\star(y,\partial_y)G_P^M(x,y)=\gd_x(y)  \qquad \mbox{in } M,$$
and
\begin{equation}\label{symm}
    G_{P^\star}^{M}(x,y)=G_{P}^{M}(y,x) \qquad \forall x,y\in M,
\end{equation}
where $\gd_{z}$ denotes the Dirac distribution at $z \in M$.

A positive Green function $G_P^M(x,y)$ is said to be a {\em positive minimal Green function} of $P$ in $M$ if any other positive Green function $\hat{G}_P^M(x,y)$ of $P$ in $M$  satisfies
$0< G_P^M(x,y)\leq \hat{G}_P^M(x,y)$ in $M\times M$.
 }
\end{defi}
The aim of the present article is to study the existence, uniqueness, and behaviour of a certain type of Green functions for a general (not necessarily symmetric) nonnegative elliptic operator of the form \eqref{div_P} in $M$. We recall that the existence of a fundamental solution for differential operators with {\em constant coefficients} has been proved by B.~Malgrange and L.~Ehrenpreis in \cite{Eh,ML} and for elliptic operators with {\em analytic} coefficients by F.~John \cite{FJ} using the {\em unique continuation property}.

\medskip

We are motivated by the celebrated paper of Peter Li and Luen-Fai Tam \cite{LT} who constructed a {\em symmetric} Green function for the Laplace-Beltrami operator $P:=-\Gd$ on a {\em complete}, noncompact, smooth  Riemannian manifold $M$.

In order to explain the Li--Tam result, we recall the standard construction of the positive minimal Green function for a nonnegative operator $P$ in $M$.

Let $\{M_{j}\}_{j=1}^{\infty}$ be a {\em (compact) exhaustion} of  $M$,
i.e. a sequence of smooth, relatively compact domains in $M$ such that
$M_1\neq \emptyset$, $M_j\Subset M_{j+1}$ and
$\cup_{j=1}^{\infty}M_{j}=M$. For every $j\geq 1$, let $G_P^{M_j}(x,y)$ be the Dirichlet Green function of $P$ in
 $M_j$. By the generalized maximum principle, $\{G_P^{M_j}(x,y)\}_{j=1}^{\infty}$ is
an increasing sequence of positive functions which either converges in $M\times M$ to $G_P^{M}(x,y)$, the
{\em positive minimal Green function} of the operator $P$ in $M$ (this is the {\em subcritical} case, see Definition~\ref{critical}), or
 \begin{equation}\label{Ginfty}
    \lim_{j\to\infty}G_P^{M_j}(x,y)=\infty.
 \end{equation}
If \eqref{Ginfty} holds, then we say that $P$ is {\em critical} in $M$ (for more details see Section~\ref{sec_1.5}).

Li and Tam \cite{LT} modified the above construction for the special case of a {\em critical} Laplace-Beltrami operator $P := -\Gd$ on a complete smooth manifold, by subtracting from the above sequence an appropriate sequence $\{a_j\}_{j=1}^{\infty}$ of positive numbers.  It turns out that the sequence  $\{G_{-\Gd}^{M_j}(x,y)-a_j\}_{j=1}^{\infty}$ admits a subsequence that converges to a symmetric Green function, which we call a {\em Li--Tam Green function}. Moreover, Li and Tam proved that such a Green function satisfies certain boundedness properties.

\medskip

In the present paper, we use a modification of the Li--Tam's construction to obtain, for a general critical operator $P$ of the form \eqref{div_P}, the existence of a Green function that satisfies certain boundedness properties (see Theorem~\ref{maintheorem}).  Moreover, we prove in  Theorem~\ref{FAE} that the obtained Green function is unique (up to an addition of a product of the unique ground states of $P$ and $P^\star$) in a certain class of Green functions, which we call
the {\em Li--Tam class of Green functions} (see Definition~\ref{LT_Class}). Moreover, we establish for the first time,
a {\em unique way} to obtain for a critical operator, a Li--Tam Green function, avoiding the need of the extraction of a subsequence as is done in \cite{LT}.

We note that the proof of Li and Tam has to be modified significantly, since in the general case, the constant function is not a solution, the weak maximum principle and the unique continuation property do not hold, and $P$ is not necessarily symmetric.

\medskip

Furthermore, we study the  behaviour  at infinity of Li--Tams's Green function (in the critical case).  This question was raised by Li--Tam's in  \cite{LT} and partially answered there under some assumptions on the manifold. In particular, the exact asymptotic behaviour of the constructed Green function is proved in \cite{LT} under the assumptions that  $M$ has nonnegative sectional curvature outside a compact set, and has only  small ends.  We extend their result for a general critical operator $P$ of the form \eqref{div_P}, under the assumption that  the corresponding Martin boundary is a singleton, but without any curvature assumption.

\medskip

The outline of  our paper is as follows. In Section~\ref{sec_1.5} we recall some definitions and basic known results, and state the main results of the paper (Theorem~\ref{maintheorem} and Theorem~\ref{FAE}). Section~\ref{sec_2} and Section~\ref{sec_FAE} are devoted to the proof of Theorem~\ref{maintheorem} and Theorem~\ref{FAE} respectively.
Section~\ref{limiting} is devoted to the study of behaviour of Green function at infinity.
Finally, the paper ends in Section~\ref{sec_3}, where we present some explicit examples of Green functions,  discuss some applications, and pose some questions related to our study.

\medskip

We conclude this section with some notation. Throughout the paper, we write $\Omega_1 \Subset \Omega_2$ if $\Omega_2$ is open, $\overline{\Omega_1}$ is
compact and $\overline{\Omega_1} \subset \Omega_2$. Let $f,g \in C(\Gw)$ be positive functions, we denote $f\asymp g$ in $\Gw$, if there exists a positive constant $C$ such that
$$C^{-1}g(x)\leq f(x) \leq Cg(x) \qquad \mbox{ for all } x\in \Gw,$$
and $f\sim g$ as $x \to p$ if
$\lim_{x\to p}\frac{g(x)}{f(x)}=1$.
The oscillation of a function $f$ in a set $K$ is denoted by $\mathrm{Osc}(f)$ in $K$. Finally, the ideal point in the one-point compactification of $M$ is denoted by $\bar{\infty}$.


\section{Statement of the main results}\label{sec_1.5}
In the present section we state the main results of the paper. To this end, we first recall the definitions of critical and subcritical operators and of a ground state (for more details on criticality theory, see \cite{MM1,YP3,pinch2} and references therein).
\begin{defi}\label{groundstate}{\em
Let $K \Subset M$. We say that $u \in \mathcal{C}_{P}(M \setminus K)$ is a {\em positive solution of the operator $P$ of minimal growth in a neighborhood of infinity in $M$}, if for
any $K \Subset K_{1} \Subset M$ with a smooth boundary and any positive supersolution $v$ of $Pw=0$ in $M \setminus K_{1}$ satisfying $v\in C((M \setminus K_{1})\cup \partial K_{1})$, the inequality
$u \leq v$ on $\partial K_{1}$ implies that $u \leq v$ in $M \setminus K_{1}$.

A positive solution $u \in \mathcal{C}_{P}(M)$ which has minimal growth in a neighborhood of infinity in
$M$ is called the \emph{(Agmon) ground state} of $P$ in $M.$
}
 \end{defi}
\begin{defi}\label{critical}{\em
The operator $P$ is said to be {\em critical} in $M$ if $P$ admits a ground state in $M$. The operator $P$ is called {\em subcritical} in $M$ if  $P\geq 0$ in $M$ but $P$ is
not critical in $M$.
 }
\end{defi}
\begin{rem}\label{altenatecritical}{\em
Let $P\geq 0$ in $M$. It is well known that the operator $P$ is critical in $M$ if and only if the equation $P u = 0$ in $M$ has a unique (up to a multiplicative constant)  positive supersolution
(see \cite{MM1,YP3}). In particular, $P\geq 0$ in $M$ is critical in $M$ if and only if $P$ does not admit a positive Green function in $M$. Further, in the critical case $\dim  \mathcal{C}_{P}(M) = 1$, and the unique positive solution (up to a multiplicative positive constant) is a ground state of $P$ in $M$. We note that $P$ is critical in $M$ if and only if $P^\star$ is critical in $M$.

On the other hand, $P$ is subcritical in $M$ if and only if $P$ admits a unique positive minimal Green function $G_{P}^{M}(x,y)$ in $M$. Moreover, for any fixed $y\in M$, the function $G_{P}^{M}(\cdot,y)$ is a positive solution of minimal growth in a neighborhood of infinity in
$M$. We recall that $G_{P^\star}^{M}(x,y)=G_{P}^{M}(y,x)$.
}
\end{rem}
\begin{rem}\label{subcritical_subdomain}{\em
If $P\geq 0$ in $M$, then $P$ restricted to any subdomain $\Gw \Subset M$ is subcritical, and hence, $P$ admits a unique positive minimal Green function
$G^{\Gw}_{P}$ of $P$ in $\Gw$.
 }
 \end{rem}
Since in the subcritical case there exists a unique positive minimal Green function, our goal is in fact to establish the existence of a Green function in the {\em critical} case. Our main result reads as follows.
\begin{thm}\label{maintheorem}
Let $M$ be a $C^2$-smooth, noncompact, second countable, and connected Riemannian manifold of dimension $N\geq 2$, and let $P$ be a second-order elliptic operator of the form \eqref{div_P} which is critical in $M$. Denote by $\Gf$ and $\Gf^\star$ the ground states of $P$ and $P^\star$, respectively. Then
\begin{enumerate}
\item The operator $P$ admits a Green function $G_{P}^{M}(x,y)$ in $M$ obtained by (a modification of) the Li--Tam construction. In particular, in the symmetric case, $G_{P}^{M}$ is symmetric.

\item Any Green function $G_{P}^{M}(x,y)$ obtained by the Li--Tam construction satisfies the following boundedness property:
For any $y\in M$ and any neighborhood $U_y$ of $y$ there exists $C>0$ depending on $U_y$ such that
\begin{equation}\label{eq_Green_GP}
    G_{P}^{M}(x,y)\leq C\Gf(x)\quad \mbox{and }\; G_{P^\star}^{M}(x,y)\leq C\Gf^\star(x)\qquad \forall x\in  M \setminus U_y.
\end{equation}

 \medskip
\item For any Green function $\hat{G}^{M}_{P}$ we have
 \begin{equation}\label{lim_inf}
  \liminf_{x \rightarrow  \bar{\infty}}  \frac{\hat{G}^{M}_{P}(x, y)}{\Gf(x)} = -\infty \quad  for \  each \  y \in M.
 \end{equation}
\item For any $z\in M $ there exists a Green function $\hat{G}^{M}_{P}(x, y)$ obtained by the Li--Tam construction such that in some neighborhood $U_z$ of $z$ we have
\begin{equation}\label{eq_Green_GP0}
    \hat{G}_{P}^{M}(x,z)<0 \qquad \forall x\in  M \setminus U_z.
\end{equation}
\end{enumerate}
\end{thm}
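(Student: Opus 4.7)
The plan is to adapt the Li--Tam construction by replacing their scalar subtraction with a canonical rank-one correction built from both ground states, which will remove the need for subsequence extraction. Fix $x_0, y_0 \in M$ with $x_0 \neq y_0$ and normalize $\Phi(x_0) = \Phi^{\star}(y_0) = 1$. For an exhaustion $\{M_j\}$ with $x_0, y_0 \in M_1$, set $c_j := G_P^{M_j}(x_0, y_0)$ (finite by Remark~\ref{subcritical_subdomain}) and define
\[
H_j(x, y) := G_P^{M_j}(x, y) - c_j \Phi(x)\Phi^{\star}(y).
\]
Since $\Phi$ (resp.\ $\Phi^{\star}$) lies in the kernel of $P$ (resp.\ $P^{\star}$), the distributional identities $P_x H_j = \delta_y$ and $P^{\star}_y H_j = \delta_x$ persist, and $H_j(x_0, y_0) = 0$. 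In the symmetric case, $H_j(x, y) = H_j(y, x)$ follows from symmetry of $G_P^{M_j}$ together with $\Phi = \Phi^{\star}$. To prove that the \emph{full} sequence $\{H_j\}$ converges locally uniformly on $(M \times M)$ off the diagonal to a Green function $G_P^M$, I would first establish local uniform boundedness on any compact $K \Subset M \setminus \{y\}$ by a blow-up argument: were $\sup_K |H_j(\cdot, y)| \to \infty$, the rescaled sequence $H_j/\sup_K|H_j|$ would have vanishing distributional singularity at $y$ and extract, via elliptic compactness, to a $P$-harmonic function $u$ on $M$ satisfying $u(x_0) = 0$ and $\sup_K |u| = 1$; by criticality $\mathcal{C}_P(M)$ is one-dimensional and spanned by $\Phi$, so $u = \lambda \Phi$, and $u(x_0) = 0$ forces $\lambda = 0$, contradicting $\sup_K |u| = 1$. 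For full-sequence convergence, any two subsequential limits differ by $h$ with $P_x h = 0 = P^{\star}_y h$ and $h(x_0, y_0) = 0$; writing $h(\cdot, y) = \lambda(y)\Phi$ (criticality in $x$) and then $\lambda = \mu \Phi^{\star}$ (criticality in $y$), the normalization forces $\mu = 0$, hence $h \equiv 0$.

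For (2), apply the generalized maximum principle on $M_j \setminus U_y$: with $M_j^{U_y} := \max_{\partial U_y} G_P^{M_j}(\cdot, y)/\Phi$, the function $M_j^{U_y}\Phi - G_P^{M_j}(\cdot, y)$ is $P$-harmonic on $M_j \setminus U_y$ and nonnegative on $\partial U_y \cup \partial M_j$ (using $G_P^{M_j} = 0$ on $\partial M_j$), hence nonnegative throughout. Dividing by $\Phi$ and subtracting $c_j \Phi^{\star}(y)$ gives $H_j(x, y)/\Phi(x) \leq \max_{\partial U_y} H_j(\cdot, y)/\Phi$; the right-hand side converges to $\max_{\partial U_y} G_P^M(\cdot, y)/\Phi$, a finite constant $C(U_y)$, and passing to the limit yields (2). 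For (3), suppose $\liminf_{x \to \bar{\infty}} \hat{G}_P^M(x, y)/\Phi(x) \geq -C > -\infty$ for some $y$. The function $u := \hat{G}_P^M(\cdot, y) + K\Phi$ carries the positive pole of $\hat{G}_P^M$ at $y$, is bounded below on every compact set away from $y$, and is nonnegative near $\bar{\infty}$ for $K > C$; choosing $K$ sufficiently large yields $u \geq 0$ on all of $M$, hence a positive Green function of $P$ in $M$, contradicting criticality (Remark~\ref{altenatecritical}). For (4), the Li--Tam class is stable under $\hat{G} \mapsto \hat{G} + \alpha\Phi(x)\Phi^{\star}(y)$; given $z \in M$, take $C$ from (2) so that $G_P^M(x, z) \leq C\Phi(x)$ on $M \setminus U_z$, and set $\alpha := -(C+1)/\Phi^{\star}(z)$, producing $\hat{G}(x, z) \leq -\Phi(x) < 0$ outside $U_z$.

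The main obstacle is the first step: securing local uniform boundedness of $\{H_j\}$ and full-sequence convergence. In the original Li--Tam setting of $P = -\Delta$, both the availability of the constant function as a harmonic function and the weak maximum principle with constants substantially streamline the corresponding argument. Neither tool is available here for a general non-symmetric operator of the form \eqref{div_P}: constants are not $P$-harmonic, and $\Phi \neq \Phi^{\star}$ in general. The replacement is the precise rank-one correction $c_j \Phi(x)\Phi^{\star}(y)$, built symmetrically from the $P$-ground state in $x$ and the $P^{\star}$-ground state in $y$, which cancels the one-dimensional indeterminacy in both variables simultaneously and delivers a canonical Green function without any subsequence extraction.
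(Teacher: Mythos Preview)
Your proposal has a genuine gap in part~(1), and it stems from a misuse of criticality. Criticality of $P$ in $M$ means that $\dim\mathcal{C}_P(M)=1$, i.e., every \emph{positive} solution of $Pu=0$ in $M$ is a multiple of $\Phi$. It does \emph{not} imply that every $P$-harmonic function on $M$ is a multiple of $\Phi$: the archetype is $P=-\Delta$ on $\mathbb{R}^2$, where the ground state is $1$ but $x_1$, $x_1^2-x_2^2$, etc., are all harmonic. You invoke the false version twice. In the blow-up, even granting that the rescaled sequence converges on all of $M$ to a $P$-harmonic $u$ with $u(x_0)=0$ and $\sup_K|u|=1$, you cannot conclude $u=\lambda\Phi$; a sign-changing $u$ is perfectly compatible with criticality. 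In the full-sequence argument, the difference $h$ of two subsequential limits satisfies $P_xh=0$ and $h(x_0,y_0)=0$, but again $h(\cdot,y)$ need not be $\lambda(y)\Phi$. What criticality \emph{does} give is that any $P$-harmonic function bounded on one side by a multiple of $\Phi$ must be a multiple of $\Phi$; the whole difficulty is to manufacture that one-sided bound. The paper does this via the oscillation lemma (Lemma~\ref{keylemma}): after the ground state transform to $L$ with $L(1)=L^\star(1)=0$, one shows that $G_L^{M_j}(\cdot,p)$ has \emph{uniformly bounded oscillation} on each fixed annulus, and then the weak maximum principle (now available, since constants are $L$-harmonic) propagates these bounds and pins down differences of limits as bounded, hence constant.

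There is also a secondary gap in the blow-up: boundedness of $\tilde H_j$ on the fixed compact $K$ does not by itself yield bounds on larger compacta, so ``elliptic compactness'' does not hand you a limit on all of $M$ without further input (and the functions still carry a pole at $y$ for each finite $j$). Your rank-one correction $c_j\Phi(x)\Phi^\star(y)$ is exactly the right object---it is what the paper's scalar subtraction becomes before undoing the ground state transform---and your arguments for (2)--(4) are essentially those of the paper; but they all rest on part~(1), where the substitute for Lemma~\ref{keylemma} is missing.
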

The proof of the above theorem is given in the next section.
\begin{defi}\label{LT_Class}{\em
For a critical operator $P$ in $M$, we define the {\em Li--Tam class} to be the set of all Green functions which can be obtained by the Li--Tam construction.
We denote the Li--Tam class by $\mathcal{G_{LT}}$.

The set of all Green functions  $\tilde{G}^{M}_{P}(x, y)$  of the form
\begin{equation}\label{LI-TAM}
 \tilde{G}^{M}_{P}(x, y) = G^{M}_{P} (x, y) + \big(\chi(x)\Phi^{\star}(y) + \Phi(x)\chi^\star(y)\big),
 \end{equation}
 where $G^{M}_{P}(x, y) \in \mathcal{G_{LT}},$ the functions $\Phi$, and $\Phi^{\star}$ are the ground states of $P$ and $P^{\star}$, respectively, and $\chi(x)$ and $\chi^\star(x)$ are solutions of the equation $Pu=0$ and $P^\star v=0$ in $M$, respectively, is called the {\em extended Li--Tam class} and  is denoted by $\mathcal{G_{ELT}}$.
 }
 \end{defi}
 \begin{rem}
{\em  By definition  $\mathcal{G_{LT}} \subset \mathcal{G_{ELT}}$.
 }
 \end{rem}
 In the following theorem we establish a necessary and sufficient condition for a Green function in $\mathcal{G_{ELT}}$ to be in $\mathcal{G_{LT}}$.
\begin{thm}\label{FAE}
Let $P$ and $M$ satisfy the conditions of Theorem~\ref{maintheorem}.
Suppose that $G^{M}_{P}(x, y)\in \mathcal{G_{LT}}$, and  $\hat G^{M}_{P} \in \mathcal{G_{ELT}}$.
Then the following are equivalent:
\begin{enumerate}
\item There exist $x_0,y_0\in M$ and a constant $C$ such that
 \begin{equation}\label{RA1}
 \hat G^{M}_{P}(x, y_0) \leq G^{M}_{P}(x, y_0) + C \Phi(x) \qquad   \forall x \in M,
 \end{equation}
 and
 \begin{equation}\label{RA2}
 \hat G^{M}_{P}(x_0, y) \leq G^{M}_{P}(x_0, y) + C \Phi^*(y) \qquad \forall y \in M.
 \end{equation}
\item  There exists a constant $C$ such that
 $$ \hat{G}^{M}_{P}(x, y) = G^{M}_{P}(x, y) + C\Phi(x) \Phi^{\star}(y) \qquad \forall x, y\in M.$$
\item There exist $x_0,y_0\in M$ and a constant $C$ such that
 \begin{equation}\label{RRA1}
  G^{M}_{P}(x, y_0) \leq \hat G^{M}_{P}(x, y_0) + C \Phi(x) \qquad \forall x \in M,
 \end{equation}
 and
 \begin{equation}\label{RRA2}
  G^{M}_{P}(x_0, y) \leq \hat G^{M}_{P}(x_0, y) + C \Phi^*(y) \qquad  \forall y \in M.
 \end{equation}
\item $\hat G^{M}_{P} \in \mathcal{G_{LT}}$.
\end{enumerate}
\end{thm}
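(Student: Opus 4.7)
The plan is to prove the equivalence via the hub (2): establish (2) $\Rightarrow$ (1), (2) $\Rightarrow$ (3), and (2) $\Rightarrow$ (4), and then close the cycle with (1) $\Rightarrow$ (2), (3) $\Rightarrow$ (2), (4) $\Rightarrow$ (2).

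The directions (2) $\Rightarrow$ (1) and (2) $\Rightarrow$ (3) are immediate: fixing $y = y_0$ in the identity $\hat G^M_P - G^M_P = C\Phi(x)\Phi^\star(y)$ yields \eqref{RA1} and \eqref{RRA1} with constant $C\Phi^\star(y_0)$, and the $x$-slice of the same identity yields the second inequality in (1) and (3). For (2) $\Rightarrow$ (4), I would appeal to the flexibility of the Li--Tam construction developed in Section~\ref{sec_2}: that construction passes to a subsequence of $\{G_P^{M_j}(x,y) - a_j\}$ with a chosen family of normalizing scalars $a_j$, and a controlled perturbation of $\{a_j\}$ produces precisely a rank-one shift $C\Phi(x)\Phi^\star(y)$ in the limit, so $\mathcal{G_{LT}}$ is closed under addition of such product terms. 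Theorem~\ref{maintheorem}(4), which already manufactures distinct members of $\mathcal{G_{LT}}$ tailored to each $z \in M$, provides the required flexibility.

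The principal implication is (1) $\Rightarrow$ (2). Since $\hat G^M_P \in \mathcal{G_{ELT}}$, Definition~\ref{LT_Class} allows me to write
\begin{equation*}
\hat G^M_P(x, y) - G^M_P(x, y) = \chi(x)\Phi^\star(y) + \Phi(x)\chi^\star(y),
\end{equation*}
with $\chi$ and $\chi^\star$ solutions of $Pu=0$ and $P^\star v = 0$ in $M$, respectively. Fixing $y = y_0$ in \eqref{RA1} and dividing by $\Phi^\star(y_0) > 0$ yields $\chi(x) \leq \tilde C \Phi(x)$ for every $x \in M$, where $\tilde C := (C - \chi^\star(y_0))/\Phi^\star(y_0)$. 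Hence $\tilde C \Phi - \chi$ is a non-negative solution of $Pu=0$ in $M$, which by Harnack's inequality is either identically zero or strictly positive. In either case, criticality (Remark~\ref{altenatecritical}, which gives $\dim \mathcal{C}_P(M) = 1$) forces $\tilde C \Phi - \chi$ to be a scalar multiple of $\Phi$, and therefore $\chi = a \Phi$ for some $a \in \mathbb{R}$. Setting $x = x_0$ in \eqref{RA2} and applying the analogous argument to $P^\star$ gives $\chi^\star = a^\star \Phi^\star$; substituting back produces (2) with constant $a + a^\star$. The direction (3) $\Rightarrow$ (2) is entirely parallel, yielding a global lower bound $\chi(x) \geq -\tilde C \Phi(x)$ so that $\chi + \tilde C \Phi$ is a non-negative solution and hence a multiple of $\Phi$.

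The step I expect to be the main obstacle is (4) $\Rightarrow$ (2). Membership of both $G^M_P$ and $\hat G^M_P$ in $\mathcal{G_{LT}}$ only gives, via Theorem~\ref{maintheorem}(2), the upper bound \eqref{eq_Green_GP} for each Green function \emph{separately}, and property \eqref{lim_inf} prevents any naive subtraction from producing \eqref{RA1}. My plan is to exploit the construction itself: both $G^M_P$ and $\hat G^M_P$ arise as limits of $\{G_P^{M_j}(x,y) - a_j\}$ along (possibly distinct) subsequences of a common exhaustion, so their difference is governed by the asymptotic behaviour of the scalar sequences $\{a_j\}$ and the normalized boundary behaviour of $G_P^{M_j}$ (which picks out the directions of the ground states $\Phi$ and $\Phi^\star$). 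Identifying this limiting shift as a scalar multiple of $\Phi(x)\Phi^\star(y)$, and thereby recovering \eqref{RA1}--\eqref{RA2} en route to (2), is the technical heart of this step; once it is in place, the cycle closes.
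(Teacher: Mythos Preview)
Your argument for $(1)\Rightarrow(2)$ is cleaner than the paper's once the decomposition is in hand, but there is a genuine gap in how you obtain that decomposition. You write that Definition~\ref{LT_Class} ``allows'' you to express
\[
\hat G^{M}_{P}(x,y)-G^{M}_{P}(x,y)=\chi(x)\Phi^{\star}(y)+\Phi(x)\chi^{\star}(y),
\]
but the definition of $\mathcal{G_{ELT}}$ only guarantees such a decomposition relative to \emph{some} $G'\in\mathcal{G_{LT}}$, not relative to the specific $G^{M}_{P}$ in the hypotheses. To pass from one to the other you need to know that any two Li--Tam Green functions $G',G^{M}_{P}\in\mathcal{G_{LT}}$ already differ by something of the separable form $\chi_1(x)\Phi^{\star}(y)+\Phi(x)\chi_1^{\star}(y)$. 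This is not automatic: the difference of two Green functions is merely a $P$-solution in $x$ for each fixed $y$ and a $P^{\star}$-solution in $y$ for each fixed $x$, and such ``bi-solutions'' need not be separable in general (think of $-\Delta$ on $\mathbb{R}^2$, where there are many non-constant harmonic functions). The paper isolates exactly this point as Lemma~\ref{lem5}, whose proof is not trivial: it uses the oscillation bounds inherited from Lemma~\ref{keylemma} and the argument of Step~2 in Theorem~\ref{maintheorem} to show that, after subtracting the obvious separable part $\hat f(x,p)+\hat f(p,y)-\hat f(p,p)$, the remainder is bounded and hence (by criticality) vanishes.

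This same missing ingredient is precisely what you flag as the ``main obstacle'' in $(4)\Rightarrow(2)$; in fact your vague plan there---comparing two Li--Tam limits through the scalar normalizations---is heading toward Lemma~\ref{lem5}, but you have not realized that the very same lemma is already being invoked, unproved, in your $(1)\Rightarrow(2)$ step. Once Lemma~\ref{lem5} is established, your route through $(1)\Rightarrow(2)$ (bounding $\chi$ above by a multiple of $\Phi$, then using $\dim\mathcal{C}_{P}(M)=1$) is perfectly valid and arguably more transparent than the paper's argument, which shows directly that $f(\cdot,y)$ is bounded above and then invokes the unboundedness of non-constant solutions. But the separable decomposition is the technical heart of the whole theorem, and it does not come for free from the definition.
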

 \begin{rem}
 {\em
 Theorem~\ref{FAE} implies that $\hat G^{M}_{P} \in \mathcal{G_{ELT}}$ is in $\mathcal{G_{LT}}$ if and only if $\hat G^{M}_{P}$  satisfies the ``minimality" conditions \eqref{RA1} and \eqref{RA2}.
  }
 \end{rem}

As a direct application of Theorem~\ref{FAE}, we have a sharp uniqueness result (cf. \cite[Corolarry 1]{LT}).
\begin{cor}\label{cor_1}
Let $P$ be a critical operator in $M$, and let $\tilde{G}^{M}_{P}$, and $G^{M}_{P}$ be two Green functions in $\mathcal{G_{LT}}$. If $\tilde{G}^{M}_{P}(x_0,y_0)= G^{M}_{P}(x_0,y_0)$ for some $x_0,y_0\in M$, then
$\tilde{G}^{M}_{P}= G^{M}_{P}$.
\end{cor}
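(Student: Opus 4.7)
The plan is to reduce Corollary~\ref{cor_1} directly to Theorem~\ref{FAE}. Since by definition $\mathcal{G_{LT}} \subset \mathcal{G_{ELT}}$, both $\tilde{G}^{M}_{P}$ and $G^{M}_{P}$ lie in $\mathcal{G_{ELT}}$, so Theorem~\ref{FAE} applies with $\hat{G}^{M}_{P} := \tilde{G}^{M}_{P}$ and the reference Green function $G^{M}_{P}$. The hypothesis that $\tilde{G}^{M}_{P} \in \mathcal{G_{LT}}$ is precisely condition (4) of Theorem~\ref{FAE}, which by the stated equivalence implies condition (2): there exists a constant $C\in\mathbb{R}$ such that
\begin{equation*}
\tilde{G}^{M}_{P}(x,y) = G^{M}_{P}(x,y) + C\,\Phi(x)\,\Phi^{\star}(y) \qquad \forall x,y \in M.
\end{equation*}

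Next I would use the coincidence hypothesis at the single point $(x_0,y_0)$. Evaluating the identity above at $(x_0,y_0)$ gives
\begin{equation*}
0 = \tilde{G}^{M}_{P}(x_0,y_0) - G^{M}_{P}(x_0,y_0) = C\,\Phi(x_0)\,\Phi^{\star}(y_0).
\end{equation*}
Since $\Phi$ and $\Phi^{\star}$ are ground states of $P$ and $P^\star$, respectively, they are strictly positive on all of $M$ (as elements of $\mathcal{C}_{P}(M)$ and $\mathcal{C}_{P^{\star}}(M)$). Hence $\Phi(x_0)\,\Phi^{\star}(y_0) > 0$, forcing $C=0$, and therefore $\tilde{G}^{M}_{P} \equiv G^{M}_{P}$ on $M \times M$.

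There is essentially no obstacle once Theorem~\ref{FAE} is in hand; the corollary is a one-point rigidity statement packaged on top of the one-parameter description of $\mathcal{G_{LT}}$ provided by that theorem. The only thing to double-check is the strict positivity of the ground states at arbitrary points $x_0,y_0\in M$, which is standard: ground states belong to $\mathcal{C}_{P}(M)$ (resp.\ $\mathcal{C}_{P^\star}(M)$) and nonnegative solutions of $Pu=0$ that vanish at a point vanish identically by the Harnack inequality, contradicting the definition of a ground state.
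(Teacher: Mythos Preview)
Your proof is correct and is precisely the argument the paper has in mind: the corollary is stated as a direct application of Theorem~\ref{FAE}, and your use of $(4)\Rightarrow(2)$ followed by evaluation at $(x_0,y_0)$ and strict positivity of the ground states is exactly the intended route.
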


\medskip

Once the existence and the uniqueness is established, the next natural and important problem concerns the behaviour of the obtained Green function near $\bar{\infty}$ (the ideal point in the one-point point compactification of $M$). However, this question seems to be subtle and delicate.  In fact, in  \cite{LT} the authors provide, a partial answer for this question for the Laplace-Beltrami operator under some strong assumptions on the manifolds.  Motivated by the results in \cite{LT}, we state the following problem.

\medskip

\begin{Prob} {\em Let $P$ be a critical operator in $M$, and let $G^{M}_{P}$ a Li--Tam Green function.  Does the following always hold true
$$\lim_{x\to\bar \infty}\frac{G^{M}_{P}(x, y)}{\Phi(x)}=-\infty\,?$$
  }
\end{Prob}
In Theorem~\ref{main_theorem} we provide an affirmative answer  to the above problem for \emph{quasi-symmetric} elliptic operators
under the assumption that the corresponding Martin boundary is a singleton, see Section~\ref{limiting} for more details and the proof.
This result significantly differ from the related result in \cite[Theorem~4]{LT}, and relies on criticality theory of second-order elliptic operator and the Martin representation theorem which allows us to drop the curvature condition and \emph{smallness} assumption on the ends of $M$. However, generally speaking,  our assumption on the Martin boundary implies that our manifold $M$ consists of one end.

\section{Proof of Theorem~\ref{maintheorem}}\label{sec_2}
The present section is devoted to the proof of Theorem~\ref{maintheorem}. The proof hinges on Lemma~\ref{keylemma} below.

We fix $p\in M$ and  an exhaustion $\{M_j\}_{j=1}^{\infty}$ such that $B(p, 1) \subset M_{1}$.  By Remark~\ref{subcritical_subdomain}, for any $j\geq 1$, the operator $P$ admits a unique minimal positive Green function $G_{P}^{M_{j}}$ in $M_{j}$.
\begin{lem}\label{keylemma}
Let $P$ be an elliptic operator of the form \eqref{div_P} satisfying $P (1) = 0$ in $M$, and let $K$ be a compact set in $M \setminus \{p \}$. Then the sequence of Green functions $\{G^{M_{j}}_{P}(\cdot, p)\}_{j=j_0}^{\infty}$ has uniformly bounded oscillation in $K$, where $j_0$ depends on $K$.
\end{lem}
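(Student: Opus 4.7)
Set $u_j := G^{M_j}_P(\cdot,p)$ and pick $j_0$ so large that $K\Subset M_{j_0}$. On $M_{j_0}$ the two Dirichlet Green functions $u_j$ and $u_{j_0}$ carry the same fundamental singularity at $p$, so their difference
\[
h_j := u_j - u_{j_0}, \qquad j \geq j_0,
\]
is smooth across $p$ and $P$-harmonic on $M_{j_0}$. The standard comparison for Dirichlet Green functions yields the monotonicity $u_j\nearrow$, hence $h_j\geq 0$; and $h_j = u_j > 0$ on $\partial M_{j_0}$ because $u_{j_0}$ vanishes there.

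The hypothesis $P(1)=0$ makes constants $P$-harmonic, so the weak maximum/minimum principle applies to the $P$-harmonic function $h_j$ on $\overline{M_{j_0}}$:
\[
\min_{\partial M_{j_0}} u_j \;\leq\; h_j(x) \;\leq\; \max_{\partial M_{j_0}} u_j \qquad \forall\, x \in \overline{M_{j_0}}.
\]
In particular $\mathrm{Osc}_K(h_j) \leq \mathrm{Osc}_{\partial M_{j_0}}(u_j)$, which gives the reduction
\[
\mathrm{Osc}_K(u_j) \;\leq\; \mathrm{Osc}_K(u_{j_0}) \;+\; \mathrm{Osc}_{\partial M_{j_0}}(u_j).
\]
The first summand is a fixed finite number, so the lemma reduces to proving a uniform bound on $\mathrm{Osc}_{\partial M_{j_0}}(u_j)$ as $j\to\infty$.

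For this reduced bound---which I expect to be the technical heart of the argument---I would proceed by a normal families/compactness argument that exploits the criticality of $P$. In the subcritical case, $u_j \nearrow G^M_P(\cdot,p)$ locally uniformly on $M\setminus\{p\}$ and there is nothing to prove. In the critical case, fix $x_0 \in \partial M_{j_0}$ and normalize $v_j := u_j/u_j(x_0)$. By Harnack's inequality, $\{v_j\}$ is locally uniformly bounded on $M\setminus\{p\}$, and by interior H\"older/Schauder estimates it is precompact in $C^0_{\mathrm{loc}}(M\setminus\{p\})$. Any subsequential limit is a positive $P$-solution on $M\setminus\{p\}$ whose normalized singularity at $p$ has amplitude $1/u_j(x_0)\to 0$, hence extends across $p$ to a positive $P$-solution on all of $M$, which by criticality together with $P(1)=0$ (so $\Phi\equiv 1$) must equal the constant $1$. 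Thus $v_j \to 1$ uniformly on $\partial M_{j_0}$. Converting this qualitative statement into the quantitative bound $\sup_j \mathrm{Osc}_{\partial M_{j_0}}(u_j) < \infty$ is the remaining, and main, obstacle: it is obtained by iterating the decomposition on a suitably spaced nested sequence $M_{j_0}\subset M_{j_1}\subset\cdots\subset M_j$, producing the telescoping estimate
\[
\mathrm{Osc}_K(u_j) \;\leq\; \mathrm{Osc}_K(u_{j_0}) \;+\; \sum_{i=0}^{n-1}\mathrm{Osc}_{\partial M_{j_i}}(u_{j_{i+1}})
\]
(using $u_j|_{\partial M_j}=0$ to kill the tail), and summing a geometrically convergent series whose contraction factor is supplied by the strict convergence $v_j\to 1$.
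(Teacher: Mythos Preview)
Your reduction in steps 1--4 is clean and correct, and your step 5 (the normal--families argument showing $v_j:=u_j/u_j(x_0)\to 1$ locally uniformly, with removable singularity at $p$ forced by criticality and $P(1)=0$) is essentially the right compactness input. The genuine gap is in step 6.

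The convergence $v_j\to 1$ on $\partial M_{j_0}$ only yields
\[
\mathrm{Osc}_{\partial M_{j_0}}(u_j)\;\le\; 2\varepsilon\, u_j(x_0)\qquad\text{for } j \text{ large},
\]
which in the critical case is a $0\cdot\infty$ indeterminacy and gives no bound. Your telescoping identity
\[
\mathrm{Osc}_K(u_j)\;\le\;\mathrm{Osc}_K(u_{j_0})+\sum_{i=0}^{n-1}\mathrm{Osc}_{\partial M_{j_i}}(u_{j_{i+1}})
\]
is valid, but there is no mechanism producing a contraction: the rate at which $v_j\to 1$ on $\partial M_{j_i}$ depends on $i$ (the sets $\partial M_{j_i}$ escape to infinity), while $u_{j_{i+1}}(x_i)$ grows without control. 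Nothing in the argument forces $\mathrm{Osc}_{\partial M_{j_i}}(u_{j_{i+1}})$ to be summable, let alone geometrically decaying; the ``contraction factor'' is asserted, not derived.

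The paper closes exactly this gap by changing the normalization. Instead of dividing by a point value, one argues by contradiction: assume $\omega_j:=\mathrm{Osc}_{A_p(k)}(u_j)\to\infty$ along a subsequence and set
\[
h_{j,k}(x):=\omega_j^{-1}\Bigl(u_j(x)-\inf_{M_k}u_j\Bigr),
\]
so that $\mathrm{Osc}_{A_p(k)}(h_{j,k})=1$ for every $j$. A comparison with $\omega_j^{-1}G^{M_k}_P(\cdot,p)$ (via the maximum principle, using $P(1)=0$) gives the uniform two--sided bound $\omega_j^{-1}G^{M_k}_P\le h_{j,k}\le \omega_j^{-1}G^{M_k}_P+1$ on $M_k$, so subsequential limits exist, are bounded by $1$, and extend across $p$. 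A further maximum--principle argument (the supremum on spheres $\partial B(p,r)$ is monotone in $r$) forces the limit to attain its maximum at $p$, hence to be constant on $M_k$. This contradicts the preserved oscillation $1$ on $A_p(k)$. No telescoping or summability is needed; the contradiction is immediate once one normalizes by the oscillation rather than by $u_j(x_0)$. Your step 5 already contains the compactness half of this argument --- the missing move is to normalize so that the limit being constant is itself the contradiction.
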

\begin{remark}{\em
The proof of Lemma~\ref{keylemma} is along the lines of the proof of \cite[Lemma~1]{LT}, but in contrast to \cite[Lemma~1]{LT}, our proof does not rely on the unique continuation property, and therefore, the proof applies to operators of the form \eqref{div_P} defined on $M$.
}
\end{remark}
\begin{proof}[Proof of Lemma~\ref{keylemma}]
For $k\geq 1$, consider `annuli' of the form $A_{p}(k):=  M_{k} \setminus B(p, \frac{1}{k})$. For a fixed $k $ and $j>k$,
denote by $\gw_{j}(k)$ the oscillation of $\gmj$ on $A_{p}(k)$ defined by
\begin{equation}\label{oscillation}
\gw_{j}(k):=\!\!\!\! \sup_{x \in A_{p}(k)} \{ \gmj \}  -\!\!\! \inf_{x \in A_{p}(k)} \{ \gmj \}.
\end{equation}
Clearly $\oj(k) \geq 0$. Since for any compact  $K\subset M \setminus \{p \}$ there exists an annulus $A_{p}(k)$ such that $K\subset A_{p}(k)$, it suffices to prove that for any fixed $k\geq 1$, the sequence $\{\gw_{j}(k)\}_{j>k}$ is bounded.

We argue by contradiction. Suppose that there exists $k\in \mathbb{N}$ and a subsequence of $\oj(k)$ (that we do not rename), such that $\oj(k) \rightarrow \infty$.

Following \cite[Lemma~1]{LT}, we define for $j>k$ functions $h_{j}$ by
 \begin{equation}\label{modified_green}
 h_{j,k}(x) := \oj^{-1} \gmj - \oj^{-1} \inf_{z\in M_{k}} \{ G^{M_{j}}_{P}(z, p)\},
\end{equation}
where $\oj:=\gw_{j}(k)$.
 Clearly, $Ph_{j,k} = 0$ in $M_{j} \setminus \{ p \}$, and $\mathrm{Osc}(h_{j,k}) = 1$ in $A_{p}(k)$. On the other hand, by our assumptions on the coefficients of $P$,  and in light of \cite[Th\'{e}or\`{e}me~9.6]{Stampacchia} we have
$G^{M_{j}}_{P}(\cdot, p) \asymp   G^{M_{1}}_{P}(\cdot, p)$ in a punctured neighborhood of $p$, and since by our assumption
\eqref{eq_11} holds true, we have
\begin{equation}\label{newton}
G^{M_{j}}_{P}(x, p) \;\substack{\sim\\x \rightarrow p}\;
  G^{M_{1}}_{P}(x, p).
  \end{equation}
Therefore,
 $$
 h_{j,k}(x)  \;\;\substack{\sim\\x \rightarrow p}\;\; \oj^{-1} G^{M_{1}}_{P}(x, p).
 $$
Recall that by our assumption, $P(1)=0$. Therefore, \eqref{newton} and the maximum principle on the domain $M_{k}$ immediately imply
 \[
 \inf \{ h_{j,k}(x) : x \in M_{k}) \} = \inf \{ h_{j,k} : x \in \partial M_{k}) \} = 0.
 \]
Further, we claim that  $h_{j,k}$ satisfies the following estimate
\begin{equation}\label{firstestimate}
 \oj^{-1} G_{P}^{M_{k}}(x, p) \leq h_{j,k}(x) \leq \oj^{-1} G_{P}^{M_{k}}(x, p) + 1 \qquad \forall x\in M_{k},
\end{equation}
where $G_{P}^{M_{k}}(x, p)$ is the positive minimal Green function of $P$ on $M_{k}$. Indeed,  the right hand side of \eqref{firstestimate} can be easily verified by considering the following function
 \[
 h_{j,k}^\ga(x) := (1 - \alpha) h_{j,k}(x) -  \oj^{-1} G_{P}^{M_{k}}(x, p) - 1,
 \]
  where $0 < \alpha < 1$. Since $\ga>0$, it follows that  $h_{j,k}^\ga$ goes to $- \infty$ as $x \rightarrow p$. On the other hand, since the oscillation of $h_{j,k}$ on $\partial M_{k}$ is less or equal to $1$, it follows that $h_{j,k}^\ga \leq 0$ on $\partial M_{k}$. Hence, by the maximum principle, for any $0 < \alpha < 1$, $ h_{j,k}^\ga \leq 0$ in
$M_{k} \setminus \{ p\}$. Consequently, by letting $\ga\to 0$, we obtain  $h_{j,k}(x) \leq \oj^{-1} G_{P}^{M_{k}}(x, p) + 1$ in $M_{k}$.

The inequality $\oj^{-1} G_{P}^{M_{k}}(x, p) \leq h_{j,k}(x)$  follows by a similar argument using $$\tilde{h}_{j,k}^\ga(x) := (1 + \alpha) h_{j,k}(x) -  \oj^{-1} G_{P}^{M_{k}}(x, p)$$ with $\ga>0$.

Now letting $j \rightarrow \infty$ for  fixed $k$  in \eqref{firstestimate}, and using our assumption that $\oj \rightarrow \infty$, we obtain that (up to a subsequence) $h_{j, k}$ converges locally uniformly in  $M_{k} \setminus \{ p \}$ to a function $h_{k}$ which satisfies $P h_{k} = 0$ in $M_k \setminus \{ p \},$ and $0\leq h_k\leq 1.$
By a removable singularity theorem (see for example \cite{GS}), it follows that $h_{k}$ can be uniquely extended to a nonnegative function $\bar h_k$ which satisfies $P \bar h_k = 0$ in $M_k$.

\medskip

We claim that $\bar h_k  = C$ in $M_k$, where $C$ is a nonnegative constant. To see this, define
\begin{equation}\label{decreasing}
  S_{j}(r) := \sup_{x\in \partial{B(p,r)} } \{ \gmj \} \qquad \forall \ 0<r<r_0.
  \end{equation}
By the maximum principle, it turns out that $S_{j}(r)$ is a monotone decreasing function of $r$ for $0<r<r_0$. Moreover, by the maximum principle in $M_j\setminus B(p,r_0)$ we have 
 $$\sup_{x\in \partial{M_k} } \{ \gmj \} \leq \sup_{x\in \partial{B(p,r_0)} } \{ \gmj \}.$$
Now, for a given $l > k$, such that $1/l<r_0$, restrict $\bar h_k$ on
$M_k \setminus B(p, \frac{1}{l}).$  Applying the maximum principle and using the fact that $S_{j}$ is a decreasing function of $r$, we deduce that the maximum of $\bar h_k$ is attained on the interior boundary  $\partial B(p, \frac{1}{l}).$ Therefore, letting $l \rightarrow \infty,$ we conclude that $\bar h_k$ achieves its maximum at $p$, and hence, by the strong maximum principle $\bar h_k$ is a constant in $M_k$.

Next, let us define a sequence of function $f_{j, k} : =  C + 1 - h_{j, k}$ in $M_{k+1} \setminus B(p, \frac{1}{l}),$ where $l >  k$ and $j$ large.
Clearly, $\mathrm{Osc}(f_{j, k}) = 1$ in $A_{p}(k)$. Moreover, in light of \eqref{firstestimate}, the monotonicity of $S_{j}(r)$ (as a function of $r$), and the Harnack inequality, we conclude that $f_{j, k}$ is positive and locally bounded in $M_{k+1} \setminus B(p, \frac{1}{l})$. Hence (up to a subsequence) $f_{j, k}$ converges locally uniformly in
$M_{k+1} \setminus B(p, \frac{1}{l})$ to a function $f_{k}$. In fact, this subsequence converges locally uniformly in $M_{k+1}$. Moreover,
$f_{k}$ is a nonzero constant in $A_{p}(k)$. But this contradicts the fact that $\mathrm{Osc}(f_k)=\mathrm{Osc}(f_{j, k}) = 1$ in $A_{p}(k)$.
\end{proof}

We turn now to the proof of our main theorem.
\begin{proof}[Proof of Theorem~\ref{maintheorem}]
First, we make a simple reduction step. Recall that $P$ is critical in $M$ if and only if $P^{\star}$ is critical in $M$.
Using  a modified ground state transform, we transform the operator $P$ into an elliptic operator $L$ of the form \eqref{div_P} defined by
\[
L(u) := \Phi^{\star} P (\Phi u),
\]
where $\Phi$ and $\Phi^{\star}$ denote the ground state of the operator $P$ and $P^{\star}$, respectively.

Clearly, $L(1) = L^{\star}(1) = 0$, and hence, $L$ and $L^{\star}$ satisfy the weak and the strong maximum principle. Moreover, since $P$ is critical in $M$ it follows that $L$ and $L^{\star}$ are critical in $M$, and $1$ is the unique (up to a multiplicative constant)  ground state of $L$ and of $L^\star$ in $M$. Furthermore,
$\gpm(x, y)$ is a Green function of $P$ in $M$ if and only if $G_{L}^{M}(x, y) :=\dfrac{\gpm(x, y)}{  \Phi(x) \Phi^{\star}(y)}$ is a Green function of $L$ in $M$.

\medskip

Therefore, it is enough to prove the theorem for a critical elliptic operator $L$ of the form \eqref{div_P} that satisfies
$$L(1) = L^{\star}(1) = 0 \quad \mbox{ with ground states } \Gf=\Gf^\star=1.$$

(1)  The proof of the existence of a Green function for the operator $L$ is divided into two steps.

\medskip

{\bf Step~1:}
In this step we claim that for $p \in M$ fixed, there exist a subsequence of $\{G^{M_{j}}_{L}(x, p)\}$ (that we do not rename) and a sequence of real number $\alpha_{j}^{(p)}$ which depends on $p$ such that (up to a subsequence), the sequence of functions defined by
\begin{equation}\label{critical_sequence}
J^{M_{j}}_L(x, p) := G^{M_{j}}_{L}(x, p) - \alpha_{j}^{(p)}
\end{equation}
converges  locally uniformly in $M \setminus \{ p \}$ to a solution $J^{M}_L(x, p)$ of the equation $Lu=\gd_p$ in $M$.

Let $k> 1$, and  denote for $j>k$
$$
I_{j}(k) := \!\!\inf_{x \in \partial M_{k}} \!\!\! G^{M_{j}}_{L}(x, p), \qquad S_{j}(k) := \!\!\sup_{x \in \partial M_{k}} \!\!\!G^{M_{j}}_{L}(x, p),
$$
and let $\alpha_{j}^{(p)} := I_{j}(1)$.

By Lemma~\ref{keylemma} in  $A_{p} (k)$, there exists a constant
 $C=C(k)>0$ such that for large $j$ there holds
 \begin{equation}\label{uniformbound}
 \sup_{x \in \partial B(p, \frac{1}{k})} G^{M_j}_{L}(x, p) \leq C + \alpha_{j}^{(p)} \quad \mbox{and} \quad \alpha_{j}^{(p)} \leq C + I_{j}(k).
 \end{equation}
We claim that \eqref{uniformbound} implies that
 \begin{equation}\label{aplication_maximum}
 -C \leq J^{M_{j}}_L(x, p) \leq C \quad \mbox{in}  \ A_{p} (k).
 \end{equation}
Indeed, by the maximum principle in $A_{p} (k)$, we obtain in light of \eqref{uniformbound} that
 \[
 G_L^{M_{j}}(x, p)  - I_{j}(1) + C \geq I_{j}(k)  - I_{j}(1)+ C \geq 0 \qquad \forall x\in A_{p} (k),
 \]
and hence, $J^{M_{j}}_L(x, p) \geq -C$  in $A_{p}(k)$.

On the other hand, since $S_j(k)$ is a decreasing function of $k$, the maximum principle and  \eqref{uniformbound} imply
 \[
 G_L^{M_{j}}(x, p)  - I_{j}(1) -C  \leq \sup_{x \in \partial B(p, \frac{1}{k})} G^{M_j}_{L}(x, p) - I_{j}(1)- C \leq 0,
 \]
and hence, $J^{M_{j}}_L(x, p) \leq C$ in $A_{p}(k)$. So,  \eqref{aplication_maximum} is proved.

\medskip

Hence, the sequence $\{ J^{M_{j}}_L(\cdot,p) \}$ is locally uniformly bounded, and by standard elliptic regularity it is also locally equicontinuous in $M \setminus \{ p \}$. Arzel\`{a}–Ascoli theorem  and again elliptic regularity imply that there exists a subsequence of $\{ J^{M_{j}}_L(\cdot,p) \}$ which converges locally uniformly in $M \setminus \{ p \}$ to a solution  $J^{M}_L(x, p)$ of the equation $Lu=0$ in $M \setminus \{ p \}$.

\medskip

{\bf Claim:} $J^{M}_L(x, p)$ has a irremovable singularity at $p$, and
\begin{equation}\label{eqsing}
J^{M}_{L}(x, p) \;\substack{\sim\\x \rightarrow p}\;
 G^{M_{1}}_{L}(x, p).
 \end{equation}
Moreover, $LJ^{M}_L(x, p)=\gd_p(x)$.

Indeed, fix $0<\ga<1$, and $k>0$. Let $ G^{M_{2k}}_{L}$ be the Dirichlet Green function in  $M_{2k}$, and let $\bar \omega$ be the upper bound for the oscillation of $G^{M_{j}}_{L}$ on $M_{2k}\setminus M_{1}$, where $j>2k$. For such $j$ consider the function
  \begin{align*}
 f_{\ga,j}(x):= G^{M_{j}}_{L}(x,p)-\alpha_j^{(p)} +\bar \gw -(1-\ga)G^{M_{2k}}_{L}(x, p).
 \end{align*}
Clearly,  $\lim_{x\to p} f_{\ga,j}(x)=\infty$. On the other hand, $f_{\ga,j}(x)\geq 0$ on $\partial M_{2k}$. The maximum principle yields that $f_{\ga,j} > 0$ in $M_{2k}$.
Passing to the limit, first with  $\ga\to 0$ and then with $j\to \infty$, we obtain that
\begin{equation}\label{lowerbound}
    G^{M_{2k}}_{L}(x, p) \leq J^{M}_{L}(x,p)+\bar{\gw} \qquad x\in M_{2k}.
\end{equation}
Similarly, we obtain
\begin{equation}\label{upperbound}
J^{M}_{L}(x,p)\leq G^{M_{2k}}_{L}(x, p) +C \qquad x\in M_{2k},
\end{equation}
where $C$ is some positive constant.
Hence, $J^{M}_{L}(\cdot,p)$ is a positive solution in $M_1\setminus \{p\}$ which has a nonremovable singularity near $p$, and satisfies \eqref{eqsing}. Therefore, by the Riesz representation theorem,
we have $LJ^{M}_L(x, p)=\gd_p(x)$, and the claim is proved.

\medskip

{\bf Step~2:} In this step we establish the existence of a Green function $J^{M}_{L}(x, y)$ of $L $ in $M\times M$.

Let the reference point $p \in M$, and the converging sequence $J_{L}^{M_{j}}(x, p)$, be as in Step~1.  For a fixed $y \neq p$, consider, as in Step~1, a new sequence
 \begin{equation}\label{critical_sequence_y}
J^{M_{j}}_L(x, y) := G^{M_{j}}_{L}(x, y) - \alpha_{j}^{(y)},
\end{equation}
with a sequence $\{\ga_{j}^{(y)}\}$ of appropriate real numbers such that a subsequence of $J^{M_{j}}_{L}(x, p)$ and of $J^{M_{j}}_{L}(x, y)$ (which we do not rename)
converge to a solution in $M \setminus \{ p\}$ and $M \setminus \{ y \}$, respectively.

Recall that $G^{M_{j}}_{L^{\star}}(x, y) = G^{M_{j}}_{L}( y, x)$, where $L^{\star}$ denotes the formal adjoint of $L$. Using the fact that
  $L^{\star}(1) = 0$ and Lemma~\ref{keylemma}, we deduce as above that for a fixed $x\in M$ there exists a sequence of real numbers $\bar{\ga}_{j}^{(x)}$ such that
   \[
 G^{M_{j}}_{L^{\star}}(y, x) - \bar\ga_{j}^{(x)}
 \]
converges (up to subsequence) as a function of $y$ to a solution to the equation $L^{\star}u=0$  in $M \setminus \{ x \}$. Therefore,
  \begin{align*}
  J^{M}_{L}(x, p) &= \lim_{j \rightarrow \infty} J^{M_{j}}_{L}(x, p) \\
  & = \lim_{j \rightarrow \infty} \{ G^{M_{j}}_{L} (x, p) - \alpha_{j}^{(p)} \} \\
  & = \lim_{j \rightarrow \infty} \{ G^{M_{j}}_{L^{\star}} ( p, x) - \alpha_{j}^{(p)} \} \\
  & = \lim_{j \rightarrow \infty} \{ G^{M_{j}}_{L^{\star}} ( p, x) - \bar\ga_{j}^{(x)} \} + \lim_{j \rightarrow \infty} \{ \bar\ga_{j}^{(x)} - \alpha_{j}^{(p)} \}.
  \end{align*}
  Hence, the sequence   $\{ \bar\ga_{j}^{(x)} - \alpha_{j}^{(p)} \}$ converges (up to a subsequence) to a constant $C$. Also
  \[
   G^{M_{j}}_{L}(x, y) - \alpha_{j}^{(p)}=G^{M_{j}}_{L^{\star}}(y, x) - \alpha_{j}^{(p)} = \{ G^{M_{j}}_{L^{\star}}(y,x) - \bar\ga_{j}^{(x)} \} + \{ \bar\ga_{j}^{(x)} - \alpha_{j}^{(p)} \},
  \]
converges in $M\setminus \{x\}$ (up to a subsequence), and again as above $ G^{M_{j}}_{L}(x, y) - \alpha_{j}^{(p)} $ converges as a function of $x$ (up to a subsequence) for all $x\neq y$ to a function $J^{M}_{L}(x, y)$.

  The proof will be completed if we can show that if there is  another subsequence $\ga^{(p)}_{j_l}$ of $\ga^{(p)}_j$ such that
    \[
  G^{M_{j_l}}_{L}(x, y) - \alpha^{(p)}_{j_l}
  \]
  converges in $M\setminus \{y\}$, then it must converge to $J^{M}_{L}(x, y)$. Let us assume that
  \begin{equation}\label{different_limit}
  \lim_{l \rightarrow \infty} \{ G^{M_{j_l}}_{L}(x, y) - \alpha^{(p)}_{j_l} \} = K^{M}_{L}(x, y).
  \end{equation}
  Our aim is to prove that $J^{M}_{L}(x, y) = K^{M}_{L}(x, y)$. To this end, let us first assume that $J^{M}_{L}(\cdot, y)- K^{M}_{L}(\cdot, y)$ is a bounded function on $M\setminus \{y\}$, and hence a removable singularity theorem and the
  criticality of the operator  $L$  (with $1$ as the unique ground state) readily imply that
  $$ J^{M}_{L}(x, y) - K^{M}_{L}(x, y) = \mathrm{c}_y\qquad \forall x\in M,$$ where $\mathrm{c}_y$ is a constant depending on y. Furthermore, we have
  \begin{align*}
K^{M}_{L}(x,y) + \mathrm{c}_y&= J^{M}_{L}(x,y)  = \lim_{j \rightarrow \infty} \{ G^{M_{j}}_{L}(x,y) - \ga_{j}^{(p)} \} \\
     &= \lim_{j \rightarrow \infty} \{(G^{M_{j}}_{L}(x,y) - G^{M_{j}}_{L}(x,p)) +(G^{M_{j}}_{L}(x,p)-\ga^{(p)}_{j}) \}\\
&=     \lim_{l \rightarrow \infty}\{(G^{M_{j_l}}_{L}(x,y) - G^{M_{j_l}}_{L}(x,p)) +(G^{M_{j_l}}_{L}(x,p)-\ga^{(p)}_{j_l}) \}\\
&= \lim_{l \rightarrow \infty} \{G^{M_{j_l}}_{L}(x,y) - \ga^{(p)}_{j_l} \}
   = K^{M}_{L}(x,y).
  \end{align*}
  Hence, $J^{M}_{L}(x, y)= K^{M}_{L}(x, y)$ for all $x \in M$.

 Next we show that $J^{M}_{L}(\cdot, y)- K^{M}_{L}(\cdot, y)$ is indeed a bounded function on $M$. In fact, the proof of this statement follows as in  \cite[Theorem~1]{LT}. For the sake of completeness, we provide the proof.

Consider the difference between the two functions
 $ J^{M}_{L} (x, p) - K^{M}_{L}(x, y)$ as a function of $x \in M \setminus M_{k}$ for some fixed $k$ with $1 \leq 2k < j_{l}$ and for a fixed $y \in M_{k}.$

 \begin{align}\label{limit_compute}
 J^{M}_{L} (x, p) - K^{M}_{L}(x, y) & = \lim_{l \rightarrow \infty} \{ G^{M_{j_l}}_{L}(x, p) - \alpha^{(p)}_{j_l} \} - \lim_{l \rightarrow \infty} \{ G^{M_{j_l}}_{L}(x, y) - \alpha^{(p)}_{j_l} \}  \notag \\
 & = \lim_{l \rightarrow\infty} \{ G^{M_{j_l}}_{L}(x, p) - G^{M_{j_l}}_{L}(x, y) \}.
 \end{align}
Applying the  maximum principle on $M_{j_l} \setminus M_{2k}$ for large $j_l > 2k,$  we see

\begin{equation}\label{limiting_max}
\sup_{x \in M_{j_l} \setminus M_{2k}} \{ |G^{M_{j_l}}_{L}(x, p) - G^{M_{j_l}}_{L}(x, y)| \} \leq \sup_{x \in \partial M_{2k}} \{ |G^{M_{j_l}}_{L}(x, p) - G^{M_{j_l}}_{L}(x, y)| \}.
\end{equation}
Therefore, \eqref{limit_compute} and \eqref{limiting_max}, Lemma~\ref{keylemma} (for $L^\star$ and then for $L$) imply  for $y \in M_{k}$,
 \begin{equation}\label{local_harnack}
 \sup_{x \in M \setminus M_{2k}} \{ |J^{M}_{L} (x, p) - K^{M}_{L}(x, y) | \}     \leq C,
 \end{equation}
 where $C$ is a constant depending on $k$ and $p$. A similar argument shows that $J^{M}_{L}(x, y) - J^{M}_{L}(x, p)$ is bounded on $M \setminus M_{2k}$
 if $ y \in M_{k}.$  Consequently, for all $x \in M \setminus M_{2k}$ and each fixed $y \in M_{k},$ we have
$$
 |J^{M}_{L}(x, y) - K^{M}_{L}(x, y)|  \leq    | J^{M}_{L}(x, y)-J^{M}_{L}(x, p)| + |J^{M}_{L}(x, p) - K^{M}_{L}(x, y)|  \leq C_1,
$$
 where $C_1$ is a constant depends on $k$ and $p$.

On the other hand, it follows from the proof of the Claim that
$J^{M}_{L}(\cdot, y) - K^{M}_{L}(\cdot, y)$ is a bounded  function on $M_{2k}.$ This together with the above imply that $J^{M}_{L}(\cdot, y) - K^{M}_{L}(\cdot, y)$ is bounded in $M$.

  \medskip
Since $y\in M$ is an arbitrary point, this completes the proof that $J^{M}_{L}(x, y)$ is a well defined Green function of $L $ satisfying

\[
L J^{M}_{L}(x, y) = \delta_{y}(x) \quad \mbox{in} \ M.
\]
Moreover, from the construction it follows that $J^{M}_{L^{\star}}(x, y) = J^{M}_{L}(y, x)$.

\medskip

(2)  We need to show that for a fixed $y\in M$, $J^{M}_{L}$ is bounded above away from the pole $y$ (that is,  $J^{M}_{L}$ satisfies \eqref{eq_Green_GP}).

Consider the sequence
  \[
   G^{M_{j}}_{L}(x, y) - S_{j}(1)   =  (G^{M_{j}}_{L}(x, y) - I_{j}(1)) - (  S_{j}(1) - I_{j}(1) ).
  \]
Since the sequence $\{ S_{j}(1) - I_{j}(1) \} $ is bounded by Lemma~\ref{keylemma}, we deduce that $ \{ G^{M_{j}}_{L}(x, y) - S_{j}(1) \}$   converges (up to a subsequence) uniformly on compact subsets of $M \setminus \{ y \}$ to a Green function, denoted by $G^{M}_{L}$, that satisfies
  \[
 G^{M}_{L}(x, y) := J^{M}_{L}(x, y)  + C.
  \]

Define for $ k \geq 1,$
 \[
 \tilde S_{j}(k) := \sup  \{G^{M_{j}}_{L}(x, p) : x \in \partial M_{k} \}   - S_{j}(1).
 \]
By the maximum principle, the sequence $\{\tilde S_{j}(k)\}$ is decreasing as a function of $k$,  hence, $\tilde S_{j}(k) \leq \tilde S_{j}(1) = 0$ for all $k \geq 1$.
 Therefore, the maximum principle implies that  $$ G^{M_{j}}_{L}(x, p) - S_{j}(1) \leq 0\quad  \forall x \in M_j \setminus M_1.$$
 By passing to the limit, we conclude that
 $G^{M}_{L}(x, p) \leq 0$ for all $x \in M \setminus M_{1}$. Fix $y \in M_{k-1}$. In light of Lemma~\ref{keylemma}, we have for all $x \in M_j\setminus M_{k}$, and $j > k,$
 \begin{multline*}
 G^{M_{j}}_{L}(x, y) - S_{j}(1)  = \big( G^{M_j}_{L}(x, y) - G^{M_j}_{L}(x, p)\big)+ \big(G^{M_j}_{L}(x, p) - S_{j}(1)\big) \\
  \leq |G^{M_j}_{L}(x, y) - G^{M_j}_{L}(x, p)| \leq \sup_{x \in M_{j} \setminus M_k} |G^{M_j}_{L}(x, y) - G^{M_j}_{L}(x, p)| \\
  \leq  \sup_{x \in \partial M_k} |G^{M_j}_{L}(x, y) - G^{M_j}_{L}(x, p)| \leq C.
 \end{multline*}
 Therefore, we have
 $$
 G^{M}_{L}(x, y) \leq C \qquad \forall x \in M\setminus M_{k}, \; \forall y \in M_{k-1}.
 $$
  and this implies \eqref{eq_Green_GP}.

  \medskip

  (3) Suppose to the contrary that there exists $y\in M$ and $C\in \R$ such that
  \begin{equation}\label{eq_Green_GP_bb}
    \hat{G}_{L}^{M}(x,y)> C \qquad \forall x\in  M \setminus U_y,
\end{equation}
where $U_y$ is a bounded neighborhood of $y$. Then, the maximum principle implies that  $\hat{G}_{L}^{M}(\cdot,y)-C>0$ in $M$. So,  $\hat{G}_{L}^{M}(\cdot,y)-C>0$ is a positive supersolution of the equation $Lu=0$ in $M$ which is not a solution. But this contradicts the criticality of $L$ in $M$.

  \medskip

 (4) Let $\tilde{G}_{L}^{M}(x,y)$ be a Green function that satisfies \eqref{eq_Green_GP}, and fix $z\in M$. Let $C_z$ a positive constant and $U_z$ a neighborhood of $z$ such that
$$    \tilde{G}_{L}^{M}(x,z)< C_z \qquad \forall x\in  M \setminus U_z,$$
and define
$\hat{G}_{L}^{M}(x,y):= \tilde{G}_{L}^{M}(x,y)-C_z.$ Then $\hat{G}_{L}^{M}(x,y)$ is a Green function that satisfies \eqref{eq_Green_GP} and satisfies
$$    \hat{G}_{L}^{M}(x,z)<0 \qquad \forall x\in  M \setminus U_z.$$
\end{proof}

\begin{rem}\label{GP_green_function}
{\em  It follows from the proof of Theorem~\ref{maintheorem} that for any $p\in M$ we may construct a Green function $G^{M}_{P}$ such that $G^{M}_{P}(x,p)\leq 0$ for all $x\in M\setminus M_1$.
 }
 \end{rem}
 \medskip
\section{Proof of Theorem~\ref{FAE}}\label{sec_FAE}
This short section is devoted to the proof of Theorem~\ref{FAE}. As in the proof of Theorem~\ref{maintheorem},  it is enough to prove the theorem for the operator $L$, where $L:= \Phi^{\star} P \Phi$.

We first prove a simple lemma.
\begin{Lem}\label{lem5}
Let $\hat G^{M}_{L}\in \mathcal{G_{ELT}}$ and $G^{M}_L\in \mathcal{G_{LT}}$. Then  there exist $\chi$, and $\chi^{\star}$
such that $L\chi=0$, and $L^\star\chi^\star=0$ in $M$, and
\begin{equation}\label{NAS}
\hat G^{M}_{L}(x, y) = G^{M}_{L}(x, y)  + \chi(x) + \chi^{\star}(y)\qquad \mbox{in } M\times M.
\end{equation}
\end{Lem}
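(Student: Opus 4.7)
The plan is to unfold the definition of $\mathcal{G_{ELT}}$ and reduce the statement to a comparison of two Li--Tam Green functions. In view of the section-wide reduction $L:=\Phi^{\star}P\Phi$, with $\Phi\equiv\Phi^{\star}\equiv 1$ now serving as the ground states of $L$ and $L^{\star}$, Definition~\ref{LT_Class} applied to the hypothesis $\hat G^{M}_{L}\in \mathcal{G_{ELT}}$ yields
\begin{equation*}
\hat G^{M}_{L}(x,y) \;=\; \bar G^{M}_{L}(x,y) \,+\, \eta(x) \,+\, \eta^{\star}(y),
\end{equation*}
for some $\bar G^{M}_{L}\in \mathcal{G_{LT}}$ and solutions $\eta\in \mathcal{C}_{L}(M)$, $\eta^{\star}\in \mathcal{C}_{L^{\star}}(M)$. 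Subtracting the given $G^{M}_{L}\in \mathcal{G_{LT}}$, it suffices to prove that the difference $D:=\bar G^{M}_{L}-G^{M}_{L}$ of two Li--Tam Green functions has the separable form $\chi_{0}(x)+\chi_{0}^{\star}(y)$; I actually aim at the stronger conclusion that $D$ is a \emph{constant} $c$ on $M\times M$, which can then be absorbed into $\eta$ to give $\chi:=\eta+c$ and $\chi^{\star}:=\eta^{\star}$.

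The central step is therefore to establish that two elements of $\mathcal{G_{LT}}$ differ by a constant. My plan is to exploit the explicit construction from the proof of Theorem~\ref{maintheorem}(1): each of $\bar G^{M}_{L}$ and $G^{M}_{L}$ is a locally uniform subsequential limit of $G_{L}^{M_{j}}(x,y)-a_{j}$ (respectively $G_{L}^{M'_{k}}(x,y)-b_{k}$) for an exhaustion of $M$ and a renormalizing real sequence. After passing to a common refinement of the two exhaustions and extracting a common further subsequence $\{j_{m}\}$ along which both normalized sequences $G_{L}^{M_{j_{m}}}(x,y)-a_{j_{m}}$ and $G_{L}^{M_{j_{m}}}(x,y)-b_{j_{m}}$ converge locally uniformly, the real sequence $a_{j_{m}}-b_{j_{m}}$ must itself converge (test at any single point), to some constant $c$; consequently $D\equiv c$ on $M\times M$, which completes the proof.

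An alternative, more intrinsic route is to use the diagonal asymptotics: by the type-\eqref{newton} behavior near the pole, the singularities of $\bar G^{M}_{L}(\cdot,y)$ and $G^{M}_{L}(\cdot,y)$ coincide, so the removable singularity theorem places $D(\cdot,y)$ in $\mathcal{C}_{L}(M)$ for each $y$, and similarly $D(x,\cdot)$ is a global $L^{\star}$-solution. If in addition $D$ is globally bounded on $M\times M$, criticality of $L$ (the ground state $1$ is the unique bounded solution up to constants) forces $D(\cdot,y)$ to be a constant $c(y)$; but then $c(\cdot)=D(x_{0},\cdot)$ is a bounded solution of $L^{\star}$, again a constant by criticality of $L^{\star}$, so $D$ is globally constant.

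The main obstacle I anticipate is the two-sidedness required in either approach. The one-sided upper bound \eqref{eq_Green_GP} of Theorem~\ref{maintheorem} does not directly produce a two-sided bound on $D$: by Theorem~\ref{maintheorem}(3) we only have $\liminf_{x\to\bar{\infty}} G^{M}_{L}(x,y)=-\infty$, so neither $\bar G^{M}_{L}$ nor $G^{M}_{L}$ is bounded below. The common-exhaustion/diagonal-extraction strategy requires verifying that two members of $\mathcal{G_{LT}}$ arising from different exhaustions can be realized simultaneously along a single exhaustion. I expect Lemma~\ref{keylemma}, which controls the oscillation of $G_{L}^{M_{j}}(\cdot,p)$ uniformly in $j$ on each compact subset of $M\setminus\{p\}$, to be the essential tool for keeping the difference $a_{j_{m}}-b_{j_{m}}$ bounded, hence for extracting the subsequence converging to the required constant $c$.
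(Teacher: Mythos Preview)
Your reduction to comparing two Li--Tam Green functions $\bar G^{M}_{L},\, G^{M}_{L}\in\mathcal{G_{LT}}$ matches the paper's first move. But you then aim at the \emph{stronger} claim that $D:=\bar G^{M}_{L}-G^{M}_{L}$ is constant, whereas the lemma only requires $D$ to be separable, $D(x,y)=\tilde\chi(x)+\tilde\chi^{\star}(y)$. Constancy is indeed true, but in the paper it is derived only later, as a consequence of Theorem~\ref{FAE}, which in turn rests on the present lemma; so proving constancy here is more than is needed and more than is readily available. The two obstacles you flag are real, and neither of your routes closes them: Approach~2 requires a global two-sided bound on $D$, which \eqref{eq_Green_GP} and \eqref{lim_inf} together do not provide; Approach~1 requires realizing both Green functions along a \emph{single} exhaustion and a \emph{single} subsequence, and justifying this is essentially the ``$J=K$'' step buried in the proof of Theorem~\ref{maintheorem}---it can be done, but it is the substance of the argument, not a preliminary.

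The paper sidesteps both difficulties by proving only separability, via a second-difference device (following Li--Tam). With $\hat f:=D$ and a fixed $p\in M$, set $G^{1}_{L}(x,y):=G^{M}_{L}(x,y)+\hat f(x,p)+\hat f(p,y)-\hat f(p,p)$ and $\tilde f:=\bar G^{M}_{L}-G^{1}_{L}$; then $\tilde f(\cdot,p)=\tilde f(p,\cdot)\equiv 0$, and $\tilde f\equiv 0$ is exactly the separability of $D$. The key input is an estimate every Li--Tam Green function inherits from its construction (Lemma~\ref{keylemma} for $L^{\star}$ plus the maximum principle on $M_{j}\setminus M_{2k}$, passed to the limit): for $p,q\in M_{k}$ one has $|G(p,y)-G(q,y)|\leq C(k)$ for all $y\in M\setminus M_{2k}$. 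Applying this to both $\bar G^{M}_{L}$ and $G^{M}_{L}$ shows that $y\mapsto \tilde f(p,y)-\tilde f(q,y)$ is a globally bounded $L^{\star}$-solution, hence constant by criticality; evaluation at $y=p$ gives $0$, so $\tilde f(q,\cdot)\equiv 0$ for every $q$. The oscillation control you anticipated is precisely the right tool---but it should be applied to the second difference $\tilde f$, where it produces a genuine two-sided bound, rather than to $D$ itself.
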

\begin{proof}
By definition, $\hat G^{M}_{L} \in \mathcal{G_{ELT}}$ means that there exist $\tilde G^{M}_{L}\in \mathcal{G_{LT}}$, and $\chi_{1}$, $\chi_{1}^{\star}$ such that
$$ \hat G^{M}_{L}(x,y) = \tilde G^{M}_{L}(x, y) + \chi_{1}(x) +\chi_{1}^{\star}(y),$$
where $\tilde G^{M}_{L} \in \mathcal{G_{LT}}$, $L\chi_1=0$, and $L^\star\chi_1^\star=0$ in $M$.

Therefore, in order to prove \eqref{NAS}, it is enough to show that there exist $\tilde \chi$ and $\tilde \chi^{\star}$ with
$ L \tilde \chi=0$ and $ L^\star \tilde \chi^\star=0$ in $M$ such that
\begin{equation}\label{112}
    \tilde G_{L}^{M}(x, y) - G^{M}_{L} (x, y) = \tilde \chi (x) + \tilde \chi^{\star}(y) \qquad \mbox{in } M\times M.
\end{equation}
To see this, let us consider the difference $\hat f(x, y):=\tilde G_{L}^{M}(x, y) - G^{M}_{L} (x, y)$. Clearly, $\hat f(\cdot, y)$ and $\hat f(x, \cdot)$  are solutions to $L u = 0$ and $L^{\star} v = 0$, respectively.
Following Li--Tam \cite[Theorem~2]{LT},  we define a new Green function
 $$G^{1}_{L}(x, y) := G^{M}_{L}(x, y) + \hat f(x, p) + \hat f(p, y)- \hat f(p, p),$$
where $p \in M$ is fixed.

We assert that $G^{1}_{L}(x, y) = \tilde G^{M}_{L}(x, y)$, which clearly implies \eqref{112}, and hence \eqref{NAS}.

Indeed,  let  $\tilde f (x, y) := \tilde G^{M}_{L}(x, y) - G^{1}_{L}(x, y)$. Then
$$\tilde f (x, y) = \hat{f}(x, y) - \hat{f}(x, p) - \hat{f}(p, y) + \hat{f}(p,p),$$ and $\tilde f (x, p) = \tilde f(p, y) = 0$ for any $x, y \in M$.  Since  $\tilde G^{M}_{L}, G^{M}_{L} \in \mathcal{G_{LT}}$, it follows (as in the proof of Theorem~\ref{maintheorem}) that   for any $k \in \mathbb{N}$,  $q \in M_{k}$ and
$y \in M \setminus M_{2k}$
\begin{multline*}
|\tilde f(p, y) - \tilde f(q, y)|  \leq |\tilde G^{M}_{L}(p, y) - \tilde G^{M}_{L}(q, y)| + | G^{M}_{L}(p, y) - G^{1M}_{L}(q, y)|\\  -
\hat f(p, p) + \hat f(q, p)\leq C,
\end{multline*}
where $C$ depends on $k, p,q$. On the other hand,  as a function $y$, the difference $|\tilde f(p, y) - \tilde f(q, y)|$ is bounded in $M_{2k}$.  Therefore, the criticality of $L^{\star}$ and the arbitrariness of $k$  implies that
$$ \tilde f(p, y) - \tilde f(q, y) = C(p,q) \quad \quad \forall y ,q\in M,$$
where $C(p, q)$ is a constant depending only on $p, q$. By substituting  $y = p$ in the above equation, we see that $C(p,q)=0$ and consequently,  $\tilde f(q, y) = 0$ for all $y , q \in M$. This proves the assertion and therefore also \eqref{NAS}.
\end{proof}
\begin{proof}[Proof of Theorem~\ref{FAE}]
Obviously, $(2)$ implies $(1)$ and $(3)$.
Next we show that $(1)\,\Rightarrow \,(2)$. Set $$f(x,y):=\hat{G}^{M}_{L}(x, y)-G^{M}_{L}(x, y).$$
By Lemma~\ref{lem5},
$$f(x,y)= \chi(x) + \chi^{\star}(y)\qquad \mbox{in } M\times M,$$
where $\chi$, and $\chi^{\star}$ satisfy $L\chi=0$, and $L^\star\chi^\star=0$ in $M$, respectively.
We need to prove that $f=\mathrm{constant}$.

\medskip

Next we claim for any $q\in M$ there exists $k \in \mathbb{N}$ such that for $y \in M_{k},$ the function
$$\hat G^{M}_{L}(x, q) - \hat G^{M}_{L}(x, y) $$ is a bounded function of $x$ on $M \setminus M_{2k}.$ Indeed we write for a fixed $p \in M$
\begin{multline}\label{G-bounded}
|\hat G^{M}_{L}(x, q) - \hat G^{M}_{L}(x, y) |   \leq |G^{M}_{L}(x, q) -  G^{M}_{L}(x, p)| + |G^{M}_{L}(x, p) -  G^{M}_{L}(x, y)|  \\[2mm]
 + |\chi^\star(q) - \chi^\star(y)|,
\end{multline}
and the boundedness of \eqref{G-bounded} follows similarly to the proof of \eqref{local_harnack} in Step~2 of Theorem~\ref{maintheorem}.
 Therefore, using the boundedness of \eqref{G-bounded} and \eqref{RA1} we have for a fixed $y \in M_k$
 and for all $x \in M \setminus M_{2k}$
\begin{multline*}
f(x,y)= \hat{G}^{M}_{L}(x, y)-G^{M}_{L}(x, y)  \leq |\hat{G}^{M}_{L}(x, y)- \hat G^{M}_{L}(x, y_0)|  \\[2mm]
 + |{G}^{M}_{L}(x, y_0)-G^{M}_{L}(x, y)|+ \hat{G}^{M}_{L}(x, y_0)-G^{M}_{L}(x, y_0)   \leq C,
\end{multline*}
for some $y_{0} \in M.$
This implies, for a fixed $y \in M_k,$ the function $f(\cdot, y)$ is a bounded above solution of the equation $Lu=0$ in $M$. On the other hand, by the criticality of $L$ in $M$ it follows that
any nonconstant solution $v$ of the equation $Lu=0$ in $M$ satisfies
$$\liminf_{x\to \bar{\infty}} v(x)=-\infty, \quad \mbox{ and }  \quad  \limsup_{x\to \bar{\infty}} v(x)=\infty.$$ Therefore,
$$\liminf_{x\to \bar{\infty}} f(x, y)=-\infty, \quad \mbox{ and }  \quad  \limsup_{x\to \bar{\infty}} f(x,y)=\infty,$$
but this contradicts the fact that $f(\cdot, y)$ is bounded above. Hence, $f(x,y):=F(y)$. A similar consideration concerning $f(x,y)$ as a function of $y$ keeping $x$ fixed implies $f(x,y):=G(x)$. So, $f(x,y):=G(x)=F(y)=\mathrm{constant}.$

\medskip

$(3)\; \Rightarrow \; (2)$ follows immediately by considering $\tilde f(x, y):= G^{M}_{L}(x, y) - \hat G^{M}_{L}(x, y)$ and using similar arguments as above.

$(2)\; \Leftrightarrow \; (4)$ is obvious.
This completes the proof of the theorem.
\end{proof}
\begin{remark}\label{rem12}{\em
Corollary~\ref{cor_1} concerns the uniqueness of Li--Tam's Green functions. It implies that a Li--Tam Green function is uniquely defined by the limiting process of the proof of Theorem~\ref{maintheorem}, once the limiting value at a reference point $(q,p)$ is fixed.

Indeed, without loss of generality we may consider the operator $L$.  Fix a subsequence of
$$\left\{J^{M_{j}}_L(x,y):=G^{M_j}_{L}(x,y) - \alpha_{j}^{(p)}\right\}_{j=1}^\infty$$
that converges to a Li--Tam Green function $G$ satisfying $G(q,p)=c$ for some $c\in \R$.
It was shown in the proof of Theorem~\ref{maintheorem} that any subsequence of $\{J^{M_{j}}_L\}$ admits a subsequence that converges to a Li--Tam Green function. Corollary~\ref{cor_1} implies that any limit of such a subsequence that takes the value $c$ at $(q,p)$, is equal to $G$. In particular, if a subsequence of $\{J^{M_{j}}_L\}$ converges at a point $(q,p)$, it converges everywhere to a Li--Tam Green function.
}
 \end {remark}


\section{Behaviour at infinity of the Li--Tam Green function}\label{limiting}
The present section is devoted to the study of limiting behaviour of a Green functions near $\bar{\infty}$ in the critical case. To this end we start with the following definition:
\begin{defi}
{\em
 A second-order elliptic operator $P$ is said to be \emph{quasi-symmetric} in $M$ if for some reference point $x_0 \in M,$ the Na\"im kernel
$$\theta(x, y) := \frac{G^{M}_{P}(x, y)}{G^{M}_{P}(x, x_0)G^{M}_{P}( x_0, y)}$$
satisfies
$$
\theta(x, y) \leq C \theta(y, x) \qquad \forall \ (x, y) \in (M \setminus \{ x_0 \})^2,
$$
and for some $C \geq 1.$
}
\end{defi}
With this definition in mind, we state the main theorem of the present section.
\begin{thm}\label{main_theorem}
Let $P$ a critical operator in $M$, where $P$ and $M$ satisfy the assumptions of Theorem~\ref{maintheorem}. Fix $0 \lvertneqq W \in C^{\infty}_{0}(M)$, Assume further that $P+W$ is quasi-symmetric in $M$, and that the corresponding Martin boundary is a singleton set. Then
\begin{equation}\label{main_limit}
\lim_{ x \rightarrow {\bar{\infty}}} \frac{G^{M}_{P}(x, y)}{\Phi(x)} = - \infty,
\end{equation}
where $\Phi$ is the unique ground state of $P$ in $M$ and $G^{M}_{P}$ is the Li--Tam Green function as constructed in Theorem~\ref{maintheorem}.
 \end{thm}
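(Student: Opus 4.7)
The plan is to reduce to a normalized critical operator via the ground-state transform, derive a critical-case resolvent identity linking the Li--Tam Green function with the positive minimal Green function of the subcritical perturbation $L+W$, and then exploit the Martin representation theorem (under the singleton Martin boundary assumption) to extract the asymptotic behavior. I set $Lu := \Phi^{\star} P(\Phi u)$, so that $L(1) = L^{\star}(1) = 0$ and the ground states of both $L$ and $L^{\star}$ are identically $1$; since $G^M_L(x,y) = G^M_P(x,y)/[\Phi(x)\Phi^{\star}(y)]$ within the Li--Tam class, it suffices to prove $\lim_{x \to \bar{\infty}} G^M_L(x,y) = -\infty$ for every $y$. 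The hypotheses transfer: $L+W$ is strictly subcritical with positive minimal Green function $G^M_{L+W}$, and by the singleton minimal Martin boundary the positive $(L+W)$-harmonic cone is generated by a unique Martin kernel $K$ at $\bar{\infty}$.

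On each $M_j$ both $L$ and $L+W$ are subcritical and the resolvent identity $G^{M_j}_L(x,y) = G^{M_j}_{L+W}(x,y) + \int_{M_j} G^{M_j}_{L+W}(x,z) W(z) G^{M_j}_L(z,y) \dnu(z)$ holds. I would subtract the Li--Tam constant $a_j$ inside the integral and write $\int G^{M_j}_{L+W}(x,z) W(z) \dnu = 1 - v_j(x)$, where $v_j$ is the $(L+W)$-harmonic function on $M_j$ with boundary value $1$, so that $v_j \searrow 0$. Passing $j \to \infty$ yields
\[
G^M_L(x,y) = G^M_{L+W}(x,y) + \int_M G^M_{L+W}(x,z) W(z) G^M_L(z,y) \dnu(z) - \widetilde C\, K(x),
\]
where $\widetilde C\, K(x) = \lim_j a_j v_j(x) \geq 0$ is a nonnegative $(L+W)$-harmonic function in $x$, forced to be a multiple of $K$ by the singleton Martin boundary. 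Running the argument with $L^{\star}$ in place of $L$ and integrating against $W$, using $(L+W)(1) = W$ and $\int W(y) G^M_{L+W}(z,y)\dnu(y) = 1$, one finds $\widetilde C = (\int K W \dnu)^{-1} > 0$.

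Then as $x \to \bar{\infty}$ I would analyze each term separately. First, $G^M_{L+W}(x,y) \to 0$ for each fixed $y$: if its liminf were strictly positive, subtracting a small positive multiple of $K$ scaled appropriately from $G^M_{L+W}$ would produce a strictly smaller positive $(L+W)$-Green function, contradicting minimality, and the singleton Martin boundary then upgrades the liminf to a genuine limit. Second, by Theorem~\ref{maintheorem}~(2), $G^M_L(z,y)$ is bounded for $z \in \supp(W)$ (with $y$ outside $\supp(W)$), so dominated convergence yields $\int G^M_{L+W}(x,z) W(z) G^M_L(z,y) \dnu \to 0$. Third, $K(x) \to +\infty$: since $LK = -WK \leq 0$, the kernel $K$ is a positive $L$-subsolution, and if $K$ were bounded above the Liouville-type rigidity for critical operators (every bounded positive $L$-subsolution is a positive multiple of $\Phi = 1$) would force $K$ constant, contradicting $(L+W)K = 0$ with $W \not\equiv 0$; the singleton minimal Martin boundary then promotes unboundedness to genuine divergence at $\bar{\infty}$. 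Substituting into the identity gives $G^M_L(x,y) = -\widetilde C\, K(x) + o(1) \to -\infty$.

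The main obstacle I expect is the rigorous identification of the Martin-type limit $\lim_j a_j v_j(x) = \widetilde C K(x)$, which requires matching the rate at which the Li--Tam normalization $a_j \to +\infty$ with the rate at which $v_j \to 0$; here quasi-symmetry enters essentially, tying together the $L$- and $L^{\star}$-side behaviors and ensuring that both $\widetilde C$ and its adjoint analogue are well-defined positive constants. A secondary delicate point is upgrading the unboundedness of $K$ to the full divergence $K(x) \to +\infty$ at $\bar{\infty}$, which rests on the singleton minimal Martin boundary and the uniqueness of Martin limits.
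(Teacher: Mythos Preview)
Your resolvent-identity approach is genuinely different from the paper's (which works on an exterior domain $M\setminus K$, applies Ancona's theorem to the operator $L$ there, and then reads off the Martin decomposition of $-G^M_L$). However, two of your asymptotic claims are incorrect, and one of them hides the essential difficulty.

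First, the assertion $G^M_{L+W}(x,y)\to 0$ is false: outside $\supp(W)\cup\{y\}$ the function $G^M_{L+W}(\cdot,y)$ is a positive $L$-solution of minimal growth at $\bar\infty$, and since $1$ is the ground state of $L$ it too has minimal growth; hence $G^M_{L+W}(\cdot,y)\asymp 1$ near $\bar\infty$. In particular your minimality argument (``subtract $\varepsilon K$'') cannot work, because you also claim $K$ is unbounded, so $G^M_{L+W}-\varepsilon K$ is never globally positive. The same applies to the integral term: it is bounded, not $o(1)$. This is not fatal to your strategy, since in the identity
\[
G^M_L(x,y)=G^M_{L+W}(x,y)+\int G^M_{L+W}(x,z)W(z)G^M_L(z,y)\dnu(z)-\widetilde C\,K(x)
\]
boundedness of the first two terms together with $\widetilde C>0$ and $K(x)\to+\infty$ would still yield the conclusion.

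The real gap is the upgrade from ``$K$ is unbounded'' to ``$K(x)\to+\infty$''. Your Liouville argument correctly rules out $K$ bounded, but a positive $(L+W)$-harmonic function can be unbounded without diverging along every route to $\bar\infty$, and a singleton minimal Martin boundary by itself does not force this. This is exactly the step where quasi-symmetry is indispensable: the paper invokes Ancona's theorem (which fails in general without quasi-symmetry) to produce a positive $L$-harmonic function on $M\setminus K$ tending to $+\infty$, and then identifies it with the Martin kernel at $\bar\infty$. Your sketch never actually uses the quasi-symmetry hypothesis for this purpose, and without it the step ``singleton Martin boundary promotes unboundedness to divergence'' is unjustified. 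To repair your argument you would need to establish $K\to+\infty$ via Ancona's theorem, at which point your proof and the paper's converge on the same essential mechanism.
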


We recall that when $P$ is quasi-symmetric and \emph{subcritical} in $M$, then the above limit with respect to a certain positive $P$-harmonic function is well understood and in particular Ancona proved the following result.

\begin{thm}[Theorem~1, \cite{AA}]\label{ancona}
Assume that $P$ is subcritical and quasi-symmetric in $M$. Then, there exists a positive $P$-harmonic function $u$ in $M$ such that
for each fixed $y \in M,$
$$
\lim_{x \rightarrow {\bar{\infty}}} \frac{G^{M}_{P}(x, y)}{u(x)} = 0.
$$
Moreover, in the non quasi-symmetric case, the above limit might not exist for any $u\in \mathcal{C}_{P}(M)$.
\end{thm}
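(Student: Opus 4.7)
The plan is to perform a ground-state transform, derive a resolvent identity relating the Li--Tam Green function $G^{M}_{P}$ to the minimal positive Green function $G_W:=G^{M}_{P+W}$ of the subcritical operator $P+W$, and extract the asymptotics via Ancona's Theorem~\ref{ancona} under the singleton Martin boundary hypothesis. Setting $L:=\Phi^{-1}P\Phi$, the operator $L$ is critical in $M$ with ground state $\mathbf{1}$; the quasi-symmetry and singleton Martin boundary of $P+W$ transfer to $L+W$ (the Na\"im and Martin kernels are invariant under conjugation by $\Phi,\Phi^{\star}$), and $G^{M}_{L}(x,y)=G^{M}_{P}(x,y)/(\Phi(x)\Phi^{\star}(y))$ is a Li--Tam Green function for $L$, so it suffices to prove $G^{M}_{L}(x,y)\to -\infty$. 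Henceforth write $G_W:=G^{M}_{L+W}$ and let $u$ denote the unique (up to positive scalar) minimal positive $(L+W)$-harmonic function.

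\textbf{Key identity.} First I establish $u_\infty(x):=\int_M G_W(x,z)W(z)\dnu(z)\equiv 1$. Since $(L+W)\mathbf{1}=W$, the Riesz decomposition of $\mathbf{1}$ reads $\mathbf{1}=u_\infty+h$ with $h$ a nonnegative $(L+W)$-harmonic function, and singleton Martin boundary forces $h=cu$. If $c>0$ then $u\leq 1/c$ would be bounded, but any bounded positive $(L+W)$-harmonic function $v$ makes $\|v\|_\infty-v$ a nonnegative $L$-superharmonic function, hence constant by criticality of $L$, so $v$ is constant, contradicting $(L+W)v=Wv\gneq 0$. Therefore $c=0$ and $u_\infty\equiv 1$.

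\textbf{Resolvent identity with Li--Tam correction.} The Dirichlet-level identity $G^{M_j}_{L}=G^{M_j}_{L+W}+\int G^{M_j}_{L+W}\,W\,G^{M_j}_{L}\dnu$, rewritten in terms of $J^{M_j}_L(\cdot,y):=G^{M_j}_L(\cdot,y)-\alpha_j^{(p)}$ using $u_j(x):=\int G^{M_j}_{L+W}(x,z)W(z)\dnu(z)\leq 1$, takes the form
\[
J^{M_j}_L(x,y)=G^{M_j}_{L+W}(x,y)+\int G^{M_j}_{L+W}(x,z)W(z)J^{M_j}_L(z,y)\dnu(z)+\alpha_j^{(p)}(u_j(x)-1).
\]
Along the Li--Tam subsequence the first three terms converge (the integral by dominated convergence on the compact $\supp W$), so the indeterminate product $\alpha_j^{(p)}(u_j(x)-1)$ converges to some function $\psi(x)$ which is independent of $y$. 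Each $\alpha_j^{(p)}(u_j-\mathbf{1})$ is $(L+W)$-harmonic in $M_j$ (as $(L+W)u_j=W=(L+W)\mathbf{1}$) and nonpositive, so $\psi$ is nonpositive and $(L+W)$-harmonic in $M$; singleton Martin boundary forces $\psi=-cu$ for some constant $c\geq 0$.

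\textbf{Conclusion.} I next show $u(x)\to\infty$ as $x\to\bar\infty$: if $u(x_n)$ were bounded along some $x_n\to\bar\infty$, Ancona would give $G_W(x_n,z_0)\to 0$ for any $z_0\in M$, Harnack would then yield uniform decay of $G_W(x_n,\cdot)$ on $\supp W$, and dominated convergence applied to the key identity would force $1\to 0$, a contradiction. Dividing the limiting identity
\[
G^{M}_{L}(x,y)=G_W(x,y)+\int G_W(x,z)W(z)G^{M}_{L}(z,y)\dnu(z)-cu(x)
\]
by $u(x)$ and letting $x\to\bar\infty$ (using Ancona together with DCT on $\supp W$, where $G^{M}_{L}(\cdot,y)$ is bounded for $y\notin\supp W$) yields $G^{M}_{L}(x,y)/u(x)\to -c$. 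If $c=0$, the identity combined with $u_\infty\equiv 1$ produces $G^{M}_{L}(x,y)\geq -\sup_{\supp W}|G^{M}_{L}(\cdot,y)|$ uniformly in $x\in M$ for any fixed $y\notin\supp W$, contradicting Theorem~\ref{maintheorem}(3); hence $c>0$ (and is independent of $y$), so $G^{M}_{L}(x,y)=-cu(x)+o(u(x))\to-\infty$ for every $y\in M$. The main obstacle is passing to the limit in the indeterminate product $\alpha_j^{(p)}(u_j-\mathbf{1})$ and identifying it as $-cu$ via the singleton Martin boundary, together with establishing $u(x)\to\infty$ through the interplay of the key identity and Ancona's theorem, without which the asymptotic $G^{M}_{L}(x,y)\sim -cu(x)$ would not yield divergence to $-\infty$.
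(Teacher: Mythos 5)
Your proposal does not prove the statement in question. The statement is Ancona's theorem (quoted in the paper as Theorem~\ref{ancona} from [AA]): for a \emph{subcritical}, quasi-symmetric operator $P$ in $M$ there exists a positive $P$-harmonic function $u$ with $\lim_{x\to\bar\infty} G^M_P(x,y)/u(x)=0$, together with the assertion that this limit may fail to exist for every $u\in\mathcal{C}_P(M)$ once quasi-symmetry is dropped. What you have written is instead an argument for the paper's Theorem~\ref{main_theorem} --- the \emph{critical}-case asymptotics $\lim_{x\to\bar\infty} G^M_P(x,y)/\Phi(x)=-\infty$ for the Li--Tam Green function --- and, crucially, your argument invokes Ancona's theorem as a black box in at least two places (``Ancona would give $G_W(x_n,z_0)\to 0$'', ``using Ancona together with DCT''). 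With respect to the assigned statement the proposal is therefore circular: it assumes exactly what it is supposed to establish.

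Nothing in the proposal addresses the actual content of Ancona's theorem: the construction, for a subcritical quasi-symmetric $P$, of the distinguished positive harmonic function $u$ (in [AA] this rests on the Na\"{\i}m kernel $\theta$ and an analysis of minimal thinness at the Martin boundary, quasi-symmetry being what makes that potential theory two-sided), the proof that $G^M_P(\cdot,y)=o(u)$ at $\bar\infty$, or the counterexample required for the last sentence of the statement. Note also that the paper itself offers no proof of this theorem --- it is imported verbatim from [AA] and used as an ingredient in Section~\ref{limiting} --- so a genuine proof would have to reproduce or replace Ancona's argument, not the paper's Theorem~\ref{main_theorem}. As a separate matter, your resolvent-identity route to Theorem~\ref{main_theorem} (identifying the indeterminate term $\alpha_j^{(p)}(u_j-1)$ with $-cu$ via the singleton Martin boundary) is an interesting alternative to the paper's Martin-representation argument in Section~\ref{limiting}, but it answers a different question from the one posed.
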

\begin{proof}[Proof of Theorem~\ref{main_theorem}]
The proof is divided into three small steps.

{\bf{Step 1}:} First, as in the proof of Theorem~\ref{maintheorem}, we make a simple reduction to a critical operator $L:= \Phi^{\star} P \Phi$.
Therefore, in order to prove Theorem~\ref{main_theorem}, it is enough to show that
$$
\lim_{x \rightarrow {\bar{\infty}}} G^{M}_{L}(x, y) = -\infty
$$
for each fixed $y \in M.$

\medskip

 {\bf{Step 2 :}} In this step we show that there exists a positive $L$-harmonic function $u$ in a neighborhood of $\bar{\infty}$ in $M$ such that $u(x) \rightarrow \infty$ as $x \rightarrow {\bar{\infty}}$.

\medskip

Indeed, recall that part (4) of Theorem~2.5 implies that the critical operator  $L$ in $M$  admits a Li--Tam Green function $G^{M}_{L}(x, y)$   such that
 $$
 G^{M}_{L}(x, y) < 0 \qquad \forall x \in M\setminus U_y,
 $$
where $U_y$ is a neighbourhood of $y$. Now consider a smooth compact set $K$ in $M$ such that $U_y \subset K$.
The criticality of $L$ in $M$ readily implies that  $L$ is subcritical in $M \setminus K.$ By applying Theorem~\ref{ancona} for  $L$ on $M \setminus K,$ we obtain the existence of a positive $L$-harmonic function $u$ in $M \setminus K$ such that
\begin{equation}\label{limit_green_subdomain}
\lim_{x \rightarrow {\bar{\infty}}} \frac{G_{L}^{M \setminus K}(x, z)}{u(x)} = 0,
\end{equation}
for each fixed $z \in M \setminus K,$ where $G_{L}^{M \setminus K}(x, z)$ is the unique positive minimal Green function of $L$ in $M \setminus K$. Furthermore, we claim that
 for each fixed $z \in M \setminus K$ we have
\begin{equation}\label{equiv_1}
 G^{M\setminus K}_{L}(x, z) \asymp 1
 \end{equation}
 near ${\bar{\infty}}$.

Indeed, $G^{M \setminus K}_{L}$ is a positive solution of $L u = 0$ near ${\bar{\infty}}$ and $1$ is the ground state of $L$ in $M$. In particular, $1$ is a positive solution of minimal growth near $\bar{\infty}$.
Hence, for each fixed $z \in M$ there exists a positive constant $C(z)$ such  $G^{M \setminus K}_{L}(\cdot,z) \geq C(z) $  near ${\bar{\infty}}$.

On the other hand, $G^{M \setminus K}_{L}$ is the
positive minimal Green function of $L$ in $M\setminus K$, and hence it is positive solution of minimal growth near $\bar{\infty}$. Moreover, $L (1) = 0$ in $M$. Therefore, $G^{M \setminus K}_{L} \leq C_2$ near ${\bar{\infty}}$ for some positive constant $C_2$. Hence, the claim follows.

\medskip

Therefore, \eqref{limit_green_subdomain} and \eqref{equiv_1} immediately  yields $u(x) \rightarrow \infty$ as $x \rightarrow {\bar{\infty}}.$

\medskip

{\bf{Step 3 :}} In this final step we use our hypothesis that the Martin boundary of $M$ with respect to $L + W$ is a singleton. This assumption implies that
the Martin boundary of $M \setminus K$ with respect to $L$ consist of two components, the Euclidean boundary $\partial K$ and  $\partial_{m}{\bar{\infty}}$ (singleton set). By invoking
Martin's integral representation of positive $L$-harmonic functions, it follows that any $L$-harmonic functions $v$ in $M \setminus K$ is uniquely represented  as
\begin{equation}\label{martin_represent}
v(x) =  \int_{\partial K} \mathcal{K}_0(x, z) \ \mbox{d}\sigma(z) +  \alpha \mathcal{K}_ {{\bar{\infty}}}(x),
\end{equation}
where $0\leq \alpha \leq1$, $\sigma$ is a nonnegative finite measure on $\partial K$, $\mathcal{K}_0(x, z)$ is the Martin kernel corresponding to $z\in \partial K$, and $\mathcal{K}_{{\bar{\infty}}}(x)$ is the {\em unique} Martin kernel corresponding to $\bar{\infty}$, (recall that $\bar{\infty}$ is the ideal point in the one point compact compactification of $M$).
\medskip

We claim that if $\sigma\neq 0$, then $\int_{\partial K} \mathcal{K}_0(x, z) \ \mbox{d}\sigma(z)  \asymp 1$ for all $x$ near $\bar{\infty}$ and for $z \in \partial K$.

Indeed, since $1$ is a ground state of $L$ in $M$ it follows  there exists $C$ and $j=j(M,K,\gs)$, such that $K\Subset M_j$ and
$$ \int_{\partial K} \mathcal{K}_0(x, z) \ \mbox{d}\sigma(z) \geq C \qquad \forall x \in M\setminus M_j.$$
Next,
using Martin's compactification theorem, for $x \in M\setminus K$ we have

\begin{equation}\label{martin_compact}
\frac{G^{M \setminus K}_{L}(x, z_n)}{G^{M \setminus K}_{L}(x_0, z_n)} \rightarrow \mathcal{K}_0(x, z), \qquad  \mbox{whenever} \ z_n \rightarrow z \in \partial K
\end{equation}
On the other hand, the boundary Harnack principle for $L^*$ implies that there exist $\delta > 0$  and a constant $C > 0$ such that for a fixed $z_1 \in B(z, \delta)\cap (M\setminus K)$, we have
$$
\frac{G^{M \setminus K}_{L}(x, z_n)}{G^{M \setminus K}_{L}(x_0, z_n)} \leq C \frac{G^{M \setminus K}_{L}(x, z_1)}{G^{M \setminus K}_{L}(x_0, z_1)},
$$
for all $x$ near $\bar{\infty}$, where $C$  depends on $\delta$. Since $G^{M \setminus K}_{L}(x, z_1)$ is a solution of \emph{minimal growth} near $\bar{\infty},$ we conclude

$$
u_{n}(x) : = \frac{G^{M \setminus K}_{L}(x, z_n)}{G^{M \setminus K}_{L}(x_0, z_n)} \leq C
$$
for all $x$ near $\bar{\infty}$, where $C=C(z) > 0$ is independent of $z_n$. Consequently,  \eqref{martin_compact} yields that $\mathcal{K}_0(x, z) \leq C$ for all such $x$. Using the smoothness and compactness of $K$ it follows that in fact,
$ \int_{\partial K} \mathcal{K}_0(x, z) \ \mbox{d}\sigma(z) \leq C$  near $\bar{\infty}$, where  $C$ depends only on the compact set $K$.
\medskip

Moreover, by Step 2 there exists a positive $L$-harmonic function $u(x)$ which goes to $\infty$ as $x\to \bar{\infty}$.  Now, the previous conclusion readily implies
that
\begin{equation}\label{eq_infty}
\mathcal{K}_{\bar{\infty}}(x) \rightarrow \infty \qquad \mbox{as} \ x \rightarrow \bar{\infty}.
\end{equation}

Recall that  for a fixed $y \in M$, the function  $-G^{M}_{L}(x, y)$  is a positive $L$-harmonic function in $M \setminus K$ satisfying
$$
\limsup_{x \rightarrow \bar{\infty}} (- G^{M}_{L}) = \infty.
$$
Therefore, using \eqref{martin_represent}, it follows that
\begin{equation}\label{green_martin}
-G^{M}_{L}(x, y) =  \int_{\partial K} \mathcal{K}_0(x, z) \ \mbox{d}\sigma(z) +  \alpha_0 \mathcal{K}_ {{\bar{\infty}}}(x),
\end{equation}
where $\alpha_0 >0$. Thus, \eqref{green_martin} and \eqref{eq_infty} readily imply
$$
\liminf_{x \rightarrow \bar{\infty}} (- G^{M}_{L}) = \infty.
$$
This completes the proof.
\end{proof}
\section{Examples and Concluding Remarks}\label{sec_3}
In the present section we present several examples of Green functions which satisfies \eqref{eq_Green_GP}, and discuss some questions that arise in our study.
\begin{Def}{\em
We call the set of Green functions that satisfy \eqref{eq_Green_GP} the {\em class of relatively bounded above Green functions} and denote by $\mathcal{G_{BA}}$.
}
\end{Def}
Clearly, by Theorem~\ref{maintheorem} we have $\mathcal{G_{LT}} \subset \mathcal{G_{BA}}$.
\begin{example}
{\em
Consider the critical operator $P:= -\Gd=-\frac{\mathrm{d}^2}{\mathrm{d}x^2}$ in $M = \R^1$. A straightforward computation shows that a Green function $G_{-\Gd}^{\R^1}(x,y)\in \mathcal{G_{BA}}$
is given by
$$G_{-\Gd}^{\R^1}(x,y)=-\frac{1}{2}|x-y| + C,$$
where $C\in \R$ is a constant (but note that Theorem~\ref{maintheorem} is proved only for $N \geq 2$).

Similarly, a Green function $G_{-\Gd}^{\R^2}(x,y)\in \mathcal{G_{LT}}$ is given by
$$G_{-\Gd}^{\R^2}(x,y)=-\frac{1}{2\pi}\log|x-y| + C.$$
}
\end{example}

\medskip

Next, we present another $1$-dimensional example. Although Theorem~\ref{maintheorem} is proved only for $N \geq 2$, the 1-dimensional example below gives us the idea how to construct a nontrivial behaviour of a Green function in higher dimension.
\begin{example}
{\em
Let $M = (0, \infty)$ and consider the critical Hardy operator $P:= -\frac{\mathrm{d}^2}{\mathrm{d}x^2} - \frac{1}{4x^2}$. Note that $\Phi(x)=x^{1/2}$ is the ground state of $P$ in $M$. We construct a Green function for $P$ in $M$.

Define an exhaustion $\{ M_{j} \}_{j = 1}^{\infty}$ of $M$  by $M_{j} := (\frac{1}{j}, j).$ It can be shown easily that the Dirichlet Green function $G^{M_j}(x, 1)$ of $P$ in $M_j$ is given by

\begin{equation}\label{minimal_green}
G^{M_j}_{P} (x, 1) =\frac{1}{2}\big(\log j  -  |\log x|\big)x^{1/2}.
  \end{equation}
Clearly $G^{M_j}_{P}(x, 1) \rightarrow \infty$ as $j \rightarrow \infty.$ Therefore, we need to subtract a sequence of the form $\{a_{j}(x)=a_j x^{1/2}\}$ of constants times the ground state  $\Phi(x)=x^{1/2}$  such that
\[
\{ G^{M_j}_{P}(x, 1) - a_{j}(x) \}_{j = 1}^{\infty}
\]
converges to a Green function of $P$ in $M$. Note that  $Pa_{j}(x) = 0$. Choose $a_{j}(x) = \frac{1}{2} (\log j) x^{1/2}$, then the above condition is satisfied, and we obtain
a Green function $G^{M}_{P}\in \mathcal{G_{LT}}$ given by
\begin{equation}\label{hardy_green}
G^{M}_{P} (x, 1) =-\frac{1}{2} |\log x| x^{1/2}.
\end{equation}
Clearly, $\lim_{x \to \infty}\frac{G^{M}_{P} (x, 1)}{x^{1/2}}= -\infty$, and $\lim_{x \to 0}\frac{G^{M}_{P} (x, 1)}{x^{1/2}}= -\infty.$

\medskip

We note that
$$ \lim_{x \rightarrow \infty} G^{M}_{P}(x, 1) = -\infty, \qquad \mbox{while } \lim_{x \rightarrow 0} G^{M}_{P}(x, 1) = 0.$$
}
\end{example}
Let $P$ be critical in $M$. Recall that by Theorem~\ref{maintheorem}, for any $G_{P}^{M}(x,y)\in \mathcal{G_{BA}}$, we have
$$\liminf_{x \rightarrow  \bar{\infty}}\frac{ G^{M}_{P}(x, y)}{\Phi(x)}= -\infty,$$
where $\Phi$ is the ground state of $P$ in $M$. On the other hand, in all the above examples, the Green functions are not only finally negative, but also
$$\lim_{|x|\to\bar \infty}\frac{ G^{M}_{P}(x,y)}{\Phi(x)}=-\infty.$$

Let us consider one more example.
\begin{example}\label{ex_4}
{\em
Let $M = \mathbb{R}^{N} \setminus \{ 0\}$, where $N \geq 3$, and consider the critical Hardy operator $$P:= -\Delta - \frac{(N-2)^2}{4}\frac{1}{|x|^2}\,.$$
The two linearly independent, positive, radial solutions of the equations $Pu=0$ near $0$ and near $\infty$ are $v_1(x)=|x|^{(2-N)/2} $,  $v_2(x)=|\log |x|| |x|^{(2-N)/2} $, and $v_1$ is the corresponding ground state. It follows from \cite[Lemma~8.5]{Fuchs} that any positive solution $v$ of
the equation $P u = 0$ in a punctured neighborhood of $0$ or $\infty$ satisfies
\begin{equation}\label{hardy_green1}
\lim_{x\to 0}v(x)=\infty, \quad \mbox{or }  \lim_{x\to\infty}v(x)=0, \mbox{respectively}.
\end{equation}
On the other hand, Theorem~\ref{maintheorem} implies that for any point $x_{0} \in M$, there exists a Green function $G^{M}_{P}\in \mathcal{G_{BA}}$ and a neighborhood $\mathcal{U}_{x_0}$ of $x_0$
such that $G^{M}_{P}(x, x_0) < 0$ for all $x \in M \setminus \mathcal{U}_{x_0}$. Therefore, $-G^{M}_{P}(x, x_{0})$ is a positive solution of $P u (x) = 0$ near zero and near $\infty$.  Hence, by \cite{Fuchs},
$\lim_{x\to\gz}\frac{G^{M}_{P}(x, x_0)}{|x|^{(2-N)/2}}$ exists (in the generalized sense), where $\gz=0$ or $\gz=\infty$, and by Theorem~\ref{maintheorem} the limit is equal to $-\infty$ at least at one of these points. But, we do not know whether the limit is equal to $-\infty$ at {\em both} points.

\medskip

On the other hand,  $-G^{M}_{P}(x, x_{0})$ is a positive solution of $P u (x) = 0$ near $\infty$ and near $0$.  Hence, by \eqref{hardy_green1},  $G^{M}_{P}(x, x_0) \rightarrow 0$ as $x \rightarrow \infty$, while $G^{M}_{P}(x, x_0) \rightarrow -\infty$ as $x \rightarrow 0$. This is in contrast with the behaviour of Green function of $-\Delta$ in $\mathbb{R}^{2}.$
}
\end{example}
\begin{rem}
{\em
Example~\ref{ex_4} should be compared to the result of Section~\ref{limiting}. Indeed, the Martin boundary with respect to any Fuchsian type subcritical operator in $M = \mathbb{R}^{N} \setminus \{ 0\}$ consists of two Martin points, and  therefore, Theorem~\ref{main_theorem} is not applicable.
}
\end{rem}
This leads us to formulate the following two problems.
\begin{Prob} {\em Let $G^{M}_{P}\in \mathcal{G_{BA}}$ or $G^{M}_{P}\in \mathcal{G_{LT}}$, and let $\{M_j\}$ be a compact exhaustion of $M$. Does the following assertion hold true?

For any $j\geq 1$ there exists $k_j > j$ such that
\begin{equation}\label{limsup}
  G^{M}_{P}(x, y) < 0 \qquad  \forall x\in M_j \mbox{ and } \forall y \in M\setminus M_{k_j}.
  \end{equation}
  }
\end{Prob}
\begin{rem}
{\em
It is well known that for a {\em subcritical} operator $P$ on a noncompact manifold $M$, the celebrated Martin compactification gives an integral representation for all $u\in \mathcal{C}_{P}(M)$. Such a compactification is not available for a critical operators since a critical operator does not admit a positive Green function.

Nevertheless, we may define a {\em Martin  kernel} for a critical operator $P$ with respect to a Green function $G^{M}_{P}\in \mathcal{G_{BA}}$. Let $x_0\in M_1$ be a fixed reference point. There exists a Green function $G^{M}_{P}\in\mathcal{G_{BA}}$ and a neighborhood $\mathcal{U}_{x_0}$ of $x_0$ such that
$G^{M}_{P}(x_0, y) < 0$ for all $y \in M\setminus \mathcal{U}_{x_{0}}$. Therefore, the following Martin kernel
\[
\mathcal{K}^{M}_{P}(x, y) : = \frac{G^{M}_{P}(x, y)}{G^{M}_{P}(x_0, y)} \qquad  \forall y\in M\setminus U_{x_0},\;  x\in M\setminus \{y\}.
\]
is well defined.
}
\end{rem}
If there exists a Green function $G^{M}_{P} \in\mathcal{G_{BA}}$ which in addition satisfies \eqref{limsup}, then we have
 \begin{cor}\label{tend_ground_state}
Let $M$ be a $C^2$-smooth noncompact Riemannian manifold of dimension $N$, and let $P$ be an operator of the form \eqref{div_P}  which is critical in $M$.
Suppose that there exists a Green function $G^{M}_{P} \in \mathcal{G_{BA}}$ such that \eqref{limsup} holds true. Then
 the corresponding Martin kernel $\mathcal{K}^{M}_{P}$  satisfies
\begin{equation}\label{eq_limit_ground}
\lim_{y \rightarrow  \bar\infty} \mathcal{K}^{M}_{P}(x, y) = \Phi(x),
\end{equation}
where $\Phi$ is the ground state of $P$ satisfying $\Phi(x_0)=1$.
\end{cor}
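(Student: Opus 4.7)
The plan is to show that, as $y\to\bar\infty$, the family $\{\mathcal{K}^{M}_{P}(\cdot,y)\}$ is a precompact collection of positive $P$-harmonic functions on $M$, each normalized at $x_{0}$, and that the only possible accumulation point is $\Phi$.

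First, I fix a compact exhaustion $\{M_j\}$ of $M$ with $x_0 \in M_1$. By assumption~\eqref{limsup}, for every $j\geq 1$ there exists $k_j>j$ such that $G^{M}_{P}(x,y)<0$ for all $x\in M_j$ and all $y\in M\setminus M_{k_j}$; in particular, taking $j=1$ shows that $G^{M}_{P}(x_0,y)<0$ for such $y$, so $\mathcal{K}^{M}_{P}(\cdot,y)$ is well defined and \emph{positive} on $M_j$. Since $y\notin M_j$, the function $x\mapsto -G^{M}_{P}(x,y)$ is a positive classical solution of $Pu=0$ in $M_j$, and hence so is $\mathcal{K}^{M}_{P}(\cdot,y)$, which satisfies in addition the normalization $\mathcal{K}^{M}_{P}(x_0,y)=1$. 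By the Harnack inequality applied in $M_j$ (whose constant depends only on $P$ and on a compact subset $K\Subset M_j$), the family $\{\mathcal{K}^{M}_{P}(\cdot,y) : y\in M\setminus M_{k_j}\}$ is uniformly bounded on every such $K$, and standard interior elliptic regularity then gives local equicontinuity on $M_j$.

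Next, I let $\{y_n\}\subset M$ be any sequence with $y_n\to\bar\infty$. By a diagonal extraction over the exhaustion $\{M_j\}$ together with Arzelà–Ascoli, there is a subsequence $\{y_{n_k}\}$ such that $\mathcal{K}^{M}_{P}(\cdot,y_{n_k})$ converges locally uniformly in $M$ to some function $u$. Another application of elliptic regularity identifies $u\in\mathcal{C}_{P}(M)$ with $u(x_0)=1$. Now I invoke criticality: by Remark~\ref{altenatecritical}, $\dim\mathcal{C}_{P}(M)=1$ and the unique positive solution is the ground state; since $\Phi$ is normalized so that $\Phi(x_0)=1$, the equality $u(x_0)=1=\Phi(x_0)$ forces $u=\Phi$. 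Because every subsequence of any sequence $y_n\to\bar\infty$ admits a further subsequence converging to the \emph{same} limit $\Phi$, the entire family $\mathcal{K}^{M}_{P}(\cdot,y)$ converges locally uniformly to $\Phi$ as $y\to\bar\infty$, which is precisely~\eqref{eq_limit_ground}.

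The main point that must be secured is the uniform positivity of the denominator $G^{M}_{P}(x_0,y)$ for $y$ near $\bar\infty$, since otherwise the renormalized family could blow up; this is exactly what hypothesis~\eqref{limsup} provides via the sign condition on $G^{M}_{P}(x_0,\cdot)$ outside a sufficiently large exhaustion piece. Once this is established, the remaining compactness-plus-uniqueness argument is routine and relies only on criticality and standard elliptic regularity, with no need for Martin-boundary machinery or any structural assumption on the ends of $M$.
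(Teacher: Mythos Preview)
Your argument is correct and follows essentially the same route as the paper's proof: positivity of $\mathcal{K}^{M}_{P}(\cdot,y)$ on each $M_j$ via hypothesis~\eqref{limsup}, Harnack plus diagonalization to extract a convergent subsequence, and identification of the limit with $\Phi$ by the one-dimensionality of $\mathcal{C}_{P}(M)$. One small quibble: in your closing paragraph you speak of ``uniform positivity of the denominator $G^{M}_{P}(x_0,y)$'', but in fact the denominator is \emph{negative}, and no uniform lower bound on $|G^{M}_{P}(x_0,y)|$ is needed---the Harnack inequality applied to the normalized positive solution $\mathcal{K}^{M}_{P}(\cdot,y)$ already controls the family once the kernel is well defined and positive.
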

\begin{proof}
Let $G^{M}_{L} \in \mathcal{G_{BA}}$ such that \eqref{limsup} holds true. Let $x\in M_{j}$ for some $j\in\mathbb{N}$.
By our assumption, there exists $k_{j} > j$ such that $G^{M}_{P}(x, y)$ are negative for  all
$x \in M_j$ and $y\in  M\setminus  M_{k_j}$. Consider the Martin kernel
\[
\mathcal{K}^{M}_{L} (x, y) = \frac{G^{M}_{L}(x, y)}{G^{M}_{L}(x_0, y)}\,.
\]
Clearly, $\mathcal{K}^{M}_{L} (x_0, y) = 1$. Moreover, for any fixed $j$ and $y\in M\setminus M_{k_j}$, the function $\mathcal{K}^{M}_{L}(\cdot,y)$ is a positive solution of the equation $Pu=0$ in $M_j$.

Since $P$ is critical, it follows from the Harnack principle and a standard diagonalization argument that for any sequence $y_n\to \bar{\infty}$, there exists a subsequence $\{y_{n_\ell}\}$, such that the sequence
\begin{equation*}\label{eq_limit_ground1}
\left\{\mathcal{K}^{M}_{P}(x, y_{n_\ell})\right\}_{\ell=1}^\infty
\end{equation*}
converges to a positive solution of the equation $Pu=0$ in $M$, and \eqref{eq_limit_ground} follows by the uniqueness of the ground state.
\end{proof}
By Theorem~\ref{maintheorem} we have $\mathcal{G_{LT}} \subset \mathcal{G_{BA}}$, and hence $\mathcal{G_{ELT}} \cap \mathcal{G_{BA}} \neq \emptyset$. It is natural to pose the following problem.
\begin{Prob}
{\em
Characterize the class $\mathcal{G_{BA}}$.
}
\end{Prob}


\medskip

 \begin{center}{\bf Acknowledgments} \end{center}
D.~G. was supported in part at the Technion by a fellowship of the Israel Council for Higher Education. The authors acknowledge the support of the Israel Science Foundation (grants No. 963/11 and 970/15) founded by the Israel Academy of Sciences and Humanities.

\end{document}